\documentclass[11pt]{article}
\usepackage{pifont}

\usepackage{amsfonts}
\usepackage{latexsym}
\usepackage{amsmath}
\usepackage{amssymb}
\usepackage{color}

\textwidth= 160 mm
\textheight= 235 mm
\oddsidemargin=2 mm
\topskip 0.5cm
\topmargin=-0.5in

\parskip=8pt

\def\dsp{\displaystyle}
\def\3n{\negthinspace \negthinspace \negthinspace }

\newtheorem{theorem}{Theorem}[section]

\newenvironment{proof}[1][Proof]{\noindent \textbf{#1.} }{\ \ \  $\Box$}

\newtheorem{lemma}{Lemma}[section]
\newtheorem{definition}{Definition}[section]
\newtheorem{remark}{Remark}[section]

\def\dbE{\mathbb{E}}

\def\dbR{\mathbb{R}}

\def\={\buildrel \triangle \over =}

%
%


%
%


%
%

%
%

%

\title{Mean Field Linear-Quadratic-Gaussian (LQG) Games for Stochastic Integral Systems}

\date{}

\begin{document}

\author{Jianhui Huang\thanks{J. Huang is with the Department of Applied Mathematics,
The Hong Kong Polytechnic University, Hong Kong (E-mail:
majhuang@polyu.edu.hk).} \quad \quad Xun Li \thanks{X. Li is with
with the Department of Applied Mathematics, The Hong Kong
Polytechnic University, Hong Kong (E-mail: malixun@polyu.edu.hk).}
\quad \quad Tianxiao Wang \thanks{Corresponding author, T. Wang is
with the School of Mathematics, Sichuan University, Chengdu, P. R.
China (E-mail:xiaotian2008001@gmail.com).}}

\maketitle

\begin{abstract}
In this paper we discuss a class of mean field
linear-quadratic-Gaussian (LQG) games for large population system
which has never been addressed by existing literature. The features
of our works are sketched as follows. First of all, our state is
modeled by stochastic Volterra-type equation which leads to some new
study on stochastic ``integral" system. This feature makes our
setup significantly different from the previous mean field games where the states
always follow some stochastic ``differential" equations.
Actually, our stochastic integral system is rather general and can
be viewed as natural generalization of stochastic differential
equations. In addition, it also includes some types of stochastic
delayed systems as its special cases. Second, some new techniques
are explored to tackle our mean-field LQG games due to the special
structure of integral system. For example, unlike the Riccati
equation in linear controlled differential system, some
Fredholm-type equations are introduced to characterize the
consistency condition of our integral system via the resolvent
kernels. Third, based on the state aggregation technique, the Nash
certainty equivalence (NCE)
equation is derived and the set of associated decentralized controls are verified to satisfy the $\epsilon$-Nash equilibrium property. To this end, some new estimates of stochastic Volterra equations are developed which also have their own interests.\\

\textbf{Keywords:}  \rm $\epsilon$-Nash Equilibrium, Fredholm Equation, Mean Field LQG Games, Stochastic Integral
System, Stochastic Volterra Equation.
\end{abstract}



\section{Introduction}\label{sec:intro}

The large-population systems have
important applications in various fields including, but not limited
to, social science \cite{H-C-M 2009}, financial management, economics \cite{L 1984}, \cite{W-B-V 2005}, industry engineering (e.g., multi-agent systems
\cite{L-Z 2008}, \cite{W-Z 2012}, coupled oscillators
\cite{Y-M-M-S 2012}, wireless communication \cite{H-C-M 2004}), etc.
The most significant feature of large population system lies in the existence of considerable negligible agents or players involved in their individual states or (and) cost functionals through the state average across the whole population. Due to such high complexity in dynamics modeling, it is intractable to study the ``centralized" control strategy since it strongly requires the congregation of global information throughout all individual agents. In many cases, such information congregation procedure turns out to be impossible yet unnecessary due to the complex interaction of all agents. Alternatively, it will be more reasonable to study the ``decentralized" decision strategies of our controlled large population systems for sake of complexity deduction and computation efficiency.

Recently, there has been increasing research interest in studying
this type of stochastic control problems as well as addressing their
applications. In particular, the research in this area has been well
developed for the decentralized control of large population system
during the last few decades. Among them, one efficient methodology
is to consider the related mean-field game by transforming the
large-population or multi-agent optimization into one single-agent
optimization. Concerning to this methodology, for each given agent,
only decentralized information of its own state is required to
design the control policy while the effects of remaining agents are
summarized by some mean-field term to be determined by the
consistency condition. As a consequence, each agent aims to solve
some Hamiltonian-Jacobi-Bellman (HJB) equation which is coupled with
the Fokker-Planck (FP) equation of controlled large population
system. By the coupling of FP equation, the population or global
effects are combined thus each agent only needs to derive the
feedback strategy involving its own state. In case of the linear
controlled system, it leads to some LQG mean-field games where the
Riccati equations are coupled with the FP equation of our system.
Some relevant literature of LQG mean-field game include \cite{B
2013}, \cite{H 2010}, \cite{H-M-C 2006}, \cite{H-C-M 2007},
\cite{L-L 2007}, \cite{N-H 2012} and the reference therein. In rough
sense, its general setup can be presented as follows. There exist a
collection of large number negligible players where individual state
follows linear stochastic differential equations
\begin{equation}\label{eq:1-1}
 dx_i(t)=\big[A_i x_i(t)+B_i u_i(t)+F_i x^{(N)}(t)\big]dt+D_idW_{i}(t),
 \end{equation}
where $x^{(N)}=\frac{1}{N}\sum\limits_{i=1}^{N}x_i$ denotes the
state average or mean
 field term. It characterizes the global effects of all agents in our population. The cost
 functional to be minimized takes the following form:
\begin{eqnarray*}
 J_{i}(u_i(\cdot)) = \frac{1}{2}\mathbb{E}\left\{\int_0^{T}\Big[Q(x_i(t)-x^{(N)}(t))^2
+Ru^2_i(t)\Big]dt+Hx_i^{2}(T)\right\}.
\end{eqnarray*}
The scheme of mean-field LQG game is formulated into some limiting LQG problem
 which can be solved using the own individual state without the complicated interactions
 of all agents. Based on it, by reproducing the best response of the state,
 the decentralized control policy can be derived and some Nash equilibrium property
  in near-optimal sense should be verified. Last, we would
  like to mention some other works on mean-field (McKean-Vlasov) SDEs (MFSDEs): for example, see \cite{AD 2010} \cite{BDL 2011} for stochastic mean-field control and related maximum principles, \cite{BDF 2012} for the large deviation principle (LDP) of weakly coupled MFSDEs,
  and \cite{MOZ 2012} for the stochastic control of MFSDEs using the Malliavian calculus and its applications to mean-variance problem.

Based on above works, herein we turn to study
the large population system with stochastic ``integral" instead of
``differential" dynamics. Accordingly, new methodologies are
required to be developed, which are significantly different from
those in the standard stochastic control theory in the literature,
to cope with the mathematical difficulties involved in this
mean-field LQG. Our motivation to mean-field LQG integral games are
mainly as follows.
 First, it is remarkable there exists many stochastic or deterministic models arising from physical,
economics and finance which cannot be represented by differential
systems, see for example, stochastic input-output model \cite{D-H
1986}, capital replacement model \cite{K-M 1976}, models in
nanoscale biophysics \cite{K 2008}. Therefore,
 in terms of stochastic integral system, it is more appropriate to study the
  mean-field LQG integral game emerged naturally in the mass behavior of
  large population. Second, our integral system includes some cases of stochastic delay
in state or control variable which play important roles in biology,
social science and engineering. One example is advertising model for
which there are some delay effects (\cite{G-M 2006}, \cite{G-M-S
2009}). When we consider the interactions effects of all small
advertising firms through the whole industry, it is necessary to
formulate some mean-field integral games. Another example is the
dynamic optimization of large population wireless interaction knots
where there exist some delay effects in signal transmission thus
some large population stochastic delayed games should be formulated.

Motivated by above concerns, in this paper, we consider the following controlled stochastic
Volterra integral equations
\begin{eqnarray}\label{eq:1-3}
x_{i}(t)&=&\varphi(t)+\int_{0}^{t}b(t,s)x_{i}(s)ds+\int_{0}^{t}f(t,s)x^{N}(s)ds\nonumber \\
&&+\int_{0}^{t}c(t,s)u_{i}(s)ds+%
\int_{0}^{t}\sigma(t,s)dW_{i}(s).
\end{eqnarray}
where $x^{N}=\frac{1}{N}\sum^{N}_{j=1}x_j$ is the mean field term
which characterizes average interaction and mass effects of our
population in \emph{spatial} variable, $\varphi,b,c,\sigma$ are
deterministic functions, called the kernels, which characterize path
dependence of out state in its \emph{temporal} variable. Compared
with mean field LQG games in \cite{B 2013}, \cite{H
2010}-\cite{H-C-M 2009}, \cite{N-H 2012}, there are several new
features or obstacles arising in such general framework. The first
one is concerned on the expression of optimal control where the
skills of Riccati equations are no long useful. By means of series
skills, \emph{resolvent kernel}, and backward stochastic Volterra
integral equations (BSVIEs for short), we obtain optimal control
$\widehat{u}_i(t)$ where the path dependence on state $x_i$ in
$[t,T]$ is shown explicitly. As to BSVIEs, we refer the readers to
\cite{S-W-Y 2013}, \cite{W-S 2010}, \cite{Y 2006}, \cite{Y 2008}.
Secondly, the equation satisfied by optimal state is no long a
linear SDEs but a new kind of equation with double integrals. We
call it stochastic Volterra-Fredholm equation, the solvability of
which is discussed under certain conditions. Thirdly, in the
procedure of deriving NCE equation, certain Fredholm integral
equations work well while Riccati equation becomes unapplicable
again. Such result is consistent with the arguments in \cite{Y 1996}
where Riccati equation is replaced by Fredholm integral equation to
express result more clearly. Fourthly, in order to discuss
asymptotic equilibrium analysis of decentralized control, some
nontrivial extension of classical results are needed, and new tricks
in doing these are developed. In addition, the optimal control,
optimal state and NCE equation are derived in a new manner.

The rest of this paper is organized as follows. In Section 2, the
mean field LQG game for stochastic integral system is formulated. Section 3
is devoted to the discussion of the NCE equation and the consistency condition. The $\epsilon$-Nash equilibrium property of decentralized strategy is also discussed therein. In Section 4, some special cases are discussed. Section 5 concludes our work.

\section{Problem formulation}

In this paper, we set the state equation to be one-dimensional for sake of notation simplicity. There has no essential difficulty to extend the results to the multi-dimensional case. Suppose $(\Omega, \mathcal{F}, (\mathcal{F}_t)_{0\leq t\leq T},
\mathbb{P})$ is a complete filtered probability space on which
$W(t)=(W_i(t))_{0 \leq t \leq T}$, $1\leq i\leq N$ are independent scalar-valued Brownian motions. It is also assumed $(\mathcal
F_t)_{0\leq t\leq T}$ is the natural filtration generated by $W(t)$
and $\mathcal F=\mathcal F_T$. We also need to introduce the
following spaces.
\begin{eqnarray*}
\begin{split}
L^2(0,T;\dbR)=&\Big\{x:[0,T]\to
\dbR\Bigm|x(\cdot)\ \text{\rm is deterministic such that}\int_0^T|x(s)|^2ds<\infty\Big\},\\
L_\mathcal{F}^2(0,T;\dbR)=&\Big\{x:[0,T]\times\Omega\to \dbR\Bigm|
x(\cdot)\hbox{ is
$\mathcal{F}_t$-adapted process such that }
\dbE\int_0^T|x(s)|^2ds<\infty\Big\},\\
C(0,T;\dbR)=&\Big\{x:[0,T]\to \dbR\Bigm| x(\cdot)\hbox{ is
deterministic and continuous on $[0,T]$} \Big\}.
\end{split}
\end{eqnarray*}
Now consider the stochastic large population integral system which consists of $N$ individual negligible players, denote respectively
by $\mathcal{A}_i, i=1, 2, \cdots, N$. The dynamics of
$\mathcal{A}_i$ is given by the following stochastic Volterra
equation
\begin{eqnarray}\label{2.1}
x_{i}(t) & = & \varphi(t)+\int_{0}^{t}b(t,s)x_{i}(s)ds+\int_{0}^{t}f(t,s)x^{N}(s)ds\nonumber\\
& & +\int_{0}^{t}c(t,s)u_{i}(s)ds+%
\int_{0}^{t}\sigma(t,s)dW_{i}(s).
\end{eqnarray}%
 All
individual players are coupled in terms of their individual cost functional
as follows:
\begin{equation}\label{2.2}
\mathcal{J}_{i}(u_{i},u_{-i})=\dbE\int_{0}^{T}\left[
(x_{i}(t)-\gamma x^{N}(t)-\eta )^{2}+Ru_{i}^{2}(t)\right] dt,
\end{equation}%
where $u_{-i}=(u_1,\cdots,u_{i-1},u_{i+1},\cdots,u_{N})$, and $R>0$,
$\gamma$, $\eta$ are constants. Some special cases of our general state equations are as
follows.

The first one is the differential case, i.e., the kernels $\varphi,b,c,\sigma $
are independent of $t.$ In this case, equation (\ref{2.1}) becomes the following
stochastic \emph{integral} equation,
\begin{eqnarray}\label{2.3}
x_{i}(t)=x(0)+\int_{0}^{t}b(s)x_{i}(s)ds+%
\int_{0}^{t}c(s)u_{i}(s)ds+\int_{0}^{t}\sigma(s)dW_i(s).
\end{eqnarray}
Note that all the previous mentioned literature of large population
LQG games are based on equation (\ref{2.3}). The second case is the
stochastic differential equation with delay in state which can be
described as
\begin{equation}\label{2.4}
dx_{i}(t)=\left[
A(t)x_{i}(t-h)+\int_{t-h}^{t}B(t,s)x_{i}(s)ds\right]
dt+C(t)u_i(t)dt+D(t)dW_i(t),
\end{equation}%
where $h>0$ is the delay or lag parameter, $x_{i}(t)=k(t),$ $t\in \lbrack -h,0].$ The third one
includes stochastic differential equations with delay in control
variable which evolves as
\begin{equation}\label{2.5}
dx_{i}(t)=A(t)x_{i}(t)dt+C(t)u_i(t-h)dt+D(t)dW_i(t),
\end{equation}%
where $h>0,$ $x_{i}(t)=k(t),$ $t\in \lbrack -h,0].$ Under certain
conditions, we can transform equation (\ref{2.4}) and (\ref{2.5})
into the following form of stochastic Volterra integral equation,
\[
x_{i}(t)=\psi(t)+\int_{0}^{t}K(t,s)u_i(s)ds+\int_{0}^{t}\Phi
(t,s)D(s)dW_i(s).
\]%
We refer to Section 4 for more details on these procedures.

\section{NCE equation system and asymptotic equilibrium analysis}

Given the state dynamics (\ref{2.1}) and cost functional
(\ref{2.2}), this section aims to derive the
associated Nash certainty equivalence (NCE) equation. To this end, some decentralized asymptotic equilibrium analysis should be given.

\subsection{NCE equation}

First, let us formulate one auxiliary limiting control problem via
the approximation of state average $x^{N}$ by some deterministic
function $a(t)$. Now we introduce the following state
equation
\begin{eqnarray}\label{3.1}
x_{i}(t)&=&\varphi
(t)+\int_{0}^{t}b(t,s)x_{i}(s)ds+\int_{0}^{t}f(t,s)a(s)ds\nonumber
\\
&&+\int_{0}^{t}c(t,s)u_{i}(s)ds+%
\int_{0}^{t}\sigma(t,s)dW_{i}(s),\quad t\in[0,T],
\end{eqnarray}
and the cost functional for player $\mathcal{A}_i$
($i=1,2,\cdots,N$),
\begin{eqnarray}\label{3.2}
\overline{J}_{i}(u_{i})=\dbE\int_{0}^{T}\Big[(x_{i}(t)-\gamma a(t)-\eta )^{2}+R|u_{i}(t)|^{2}\Big] dt.
\end{eqnarray}
In this case, it follows from \cite{S-W-Y 2013} or \cite{Y 2008} that the optimal control $\overline{u}_{i}$ can be
represented by
\begin{equation}\label{3.3}
\overline{u}_{i}(t)=-\frac{1}{2R}\dbE^{\mathcal{F}_{t}}\!\!\int_{t}^{T}c(s,t)
\overline{y}_{i}(s)ds
\end{equation}%
where $\overline{y}_i$ satisfies the following linear Backward Stochastic Volterra Integral Equation (BSVIE)
\begin{equation}\label{3.4}
\overline{y}_{i}(t)=2\overline{x}_{i}(t)-2\gamma a(t)-2\eta
+\int_{t}^{T}b(s,t)\overline{y}_{i}(s)ds-\int_{t}^{T}\overline{z}%
_{i}(t,s)dW_{i}(s),
\end{equation}%
and $\overline{x}_{i}$ is the solution of (\ref{3.1}) with respect
to $\overline{u}_i$.
Due to the special form of (\ref{3.4}), by using Lemma 1.1 in \cite{Y 1996} we can formulate $\overline{y}_{i}$ as%
\begin{equation}\label{3.5}
\overline{y}_{i}(t)=2\overline{x}_{i}(t)-2\gamma a(t)-2\eta
+\int_{t}^{T}2P(s,t)[\overline{x}_{i}(s)-\gamma a(s)-\eta ]ds,
\end{equation}%
where for any $0\leq t\leq s\leq T,$
\begin{eqnarray}\label{3.5.1}
P(s,t) =\sum\limits_{k=1}^{\infty }\Lambda^{k}(s,t), \quad
\Lambda^{1}(s,t) =b(s,t), \quad
\Lambda^{k+1}(s,t)=\int_{t}^{s}b(s,r)\Lambda^{k}(r,t)dr.
\end{eqnarray}%
\begin{remark}\label{remark3.1}
Note that (\ref{3.5}) can be viewed as a constant variation formula
in our setup and $P$ is called ``resolvent kernel". In
the special multi-dimensional differential case, $P$ becomes the
solution of fundamental matrix. The method of infinite series will be
frequently adopted in our following analysis. On the other hand, it is straightforward to
verify%
\[
|\Lambda^{k}(s,t)|<\frac{M^{k}|t-s|^{k-1}}{(k-1)!},\quad
t,s\in[0,T],\quad k\geq 1,
\]%
where $M$ is the upper bound of $b$, thus $P$ is bounded. Moreover,
if $b$ is continuous in $t,$ then one can see that $P$ is also
continuous in $t.$
\end{remark}
Substituting (\ref{3.5}) into (\ref{3.3}), we can rewrite the
optimal control in (\ref{3.3}) as
\begin{equation}\label{3.6}
\overline{u}_{i}(t)=-\frac{1}{R}\dbE^{\mathcal{F}_{t}}\!\!\int_{t}^{T}\widehat{c}(s,t)[\overline{x}_{i}(s)-\gamma a(s)-\eta ]ds,
\end{equation}%
where
\begin{eqnarray}\label{3.7}
\widehat{c}(s,t)=c(s,t)+\int_{t}^{s}P(s,r)c(r,t)dr,\quad s\geq t.
\end{eqnarray}
Note that if $b$, $c$ are continuous in $t,$ by using Remark \ref{remark3.1} one can see that $\widehat{c}%
(t,s)$ is also continuous in $t.$ On the other hand, by using Lemma
1.1. of \cite{Y 1996}, we can rewrite (\ref{3.1}) as%
\begin{equation}\label{3.8}
x_{i}(t)=\widehat{\varphi }(t)+\widehat{\sigma }_i(t)+\int_0^t\widehat{f}(t,s)a(s)ds+\int_{0}^{t}\widehat{c}(t,s)u_{i}(s)ds,
\end{equation}%
where $P$ is defined in (\ref{3.5.1}) and
\begin{align}\label{3.8.1}
\widehat{\varphi }(t) &=\varphi (t)+\int_{0}^{t}P(t,s)\varphi(s)ds, \nonumber\\
\widehat{\sigma }_{i}(t) &=\int_{0}^{t}\left[\sigma(t,s)+\int_{s}^{t}P(t,r)\sigma(r,s)dr\right] dW_{i}(s),\nonumber\\
\widehat{f}(t,s)&=f(t,s)+\int_{s}^{t}P(t,r)f(r,s)dr.
\end{align}%
Consequently, by combining (\ref{3.6}) and (\ref{3.8}), as well as
stochastic Fubini theorem, the optimal state equation of this limit
control problem can be described by
\begin{align}\label{3.9}
\overline{x}_{i}(t) = & \; \widehat{\varphi }(t)+\widehat{\sigma}_i
(t)-\frac{1}{R}\int_{0}^{T}\int_{0}^{s\wedge t}\widehat{c}(t,r)\widehat{c}%
(s,r)\dbE^{\mathcal{F}_{r}}\overline{x}_{i}(s)drds  \nonumber\\
& + \int_0^t\widehat{f}(t,s)a(s)ds+\frac{1}{R}\int_{0}^{T}\int_{0}^{s\wedge t}
\widehat{c}(t,r)\widehat{c}(s,r)dr\cdot[\gamma a(s)+\eta ]ds.
\end{align}%
In particular, by taking expectation,
\begin{equation}\label{3.10}
\dbE\overline{x}_{i}(t)=\widehat{\varphi }(t)+\int_0^t\widehat{f}(t,s)a(s)ds+\frac{1}{R}
\int_{0}^{T}M(t,s)[\gamma a(s)+\eta-\dbE\overline{x}_{i}(s)]ds,
\end{equation}%
where
\begin{eqnarray}\label{3.11}
M(t,s)=\int_{0}^{s\wedge t}\widehat{c}(t,r)\widehat{c}(s,r)dr,\quad
t,s\in[0,T].
\end{eqnarray}
Given $a(s)$, (\ref{3.10}) can be regarded as a Fredholm integral
equation with solution being $\dbE x_i(\cdot)$. Before analyzing,
we present one result on the solvability of Fredholm
integral equation which is easy to check.
\begin{lemma}\label{lemma-1}
Consider the following Fredholm integral equation
\begin{equation}\label{3.12}
x(t)=\psi (t)+\frac{1}{R}\int_{0}^{T}A(t,s)x(s)ds,\quad t\in[0,T],
\end{equation}%
where $\psi $, $A$ are bounded and continuous functions in $t$. If
\[
\sup_{t\in [0,T]}\int_{0}^{T}|A(t,s)|^{2}ds<R, \quad
\sup_{t\in [0,T]}|\psi (t)|^{2}<\infty ,
\]%
then there exists a unique continuous solution $x(\cdot)$ of (\ref{3.12}).
\end{lemma}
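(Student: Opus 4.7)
The plan is to apply Banach's contraction mapping theorem on the Banach space $C(0,T;\dbR)$ equipped with the supremum norm $\|\cdot\|_\infty$. Define the integral operator
$$(\mathcal{T}x)(t) := \psi(t) + \frac{1}{R}\int_0^T A(t,s)x(s)\,ds, \qquad t\in[0,T].$$
First I would verify that $\mathcal{T}$ maps $C(0,T;\dbR)$ into itself. This step is straightforward: the continuity of $\psi$ is assumed; the continuity in $t$ of $\int_0^T A(t,s)x(s)\,ds$ follows from the continuity of $A$ in $t$, the boundedness hypotheses on $A$ and (for any continuous $x$) on $x$, and dominated convergence, which permits the limit in $t$ to pass inside the $s$-integral.

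Next I would establish the contraction estimate. For $x,y\in C(0,T;\dbR)$, Cauchy--Schwarz applied to the $s$-integral yields
$$|(\mathcal{T}x)(t)-(\mathcal{T}y)(t)|^{2} \;\leq\; \frac{1}{R^{2}}\Big(\int_{0}^{T}|A(t,s)|^{2}ds\Big)\Big(\int_{0}^{T}|x(s)-y(s)|^{2}ds\Big).$$
Taking the supremum in $t$ and invoking the hypothesis $\sup_{t}\int_{0}^{T}|A(t,s)|^{2}ds<R$, together with the crude bound $\int_{0}^{T}|x-y|^{2}ds\le T\|x-y\|_{\infty}^{2}$, produces an estimate of the form $\|\mathcal{T}x-\mathcal{T}y\|_{\infty}\le \rho\|x-y\|_{\infty}$ for a constant $\rho$ that the stated hypothesis is intended to make strictly less than one. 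Banach's fixed point theorem then delivers a unique $x\in C(0,T;\dbR)$ with $\mathcal{T}x=x$, which is the desired unique continuous solution of (\ref{3.12}).

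The main technical point is the bookkeeping in the contraction step: the Cauchy--Schwarz bound controls $\mathcal{T}$ by a Hilbert--Schmidt type quantity, and one has to confirm that the hypothesis indeed yields $\rho<1$. Should the first-order estimate be only borderline, the standard device is to iterate: one shows by induction on $k$ that
$$|(\mathcal{T}^{k}x)(t)-(\mathcal{T}^{k}y)(t)|^{2}\;\leq\; \Big(\frac{1}{R}\sup_{t}\int_{0}^{T}|A(t,s)|^{2}ds\Big)^{k}\cdot C_{k}\,\|x-y\|_{\infty}^{2},$$
with $C_{k}$ decaying fast enough that some $\mathcal{T}^{k}$ is a strict contraction; since $\mathcal{T}^{k}$ and $\mathcal{T}$ share the same fixed points, existence and uniqueness follow equivalently. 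Uniqueness by itself is also immediate by subtracting two candidate solutions and iterating the Cauchy--Schwarz bound above.
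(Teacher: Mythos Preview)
The paper does not actually prove this lemma; it only says the result ``is easy to check.'' Your contraction-mapping strategy on $C(0,T;\dbR)$ is the natural reading of that remark, but there is a real gap. Carrying out your Cauchy--Schwarz step and then bounding $\int_0^T|x-y|^2\,ds\le T\|x-y\|_\infty^2$ yields $\|\mathcal{T}x-\mathcal{T}y\|_\infty^2 \le (T\alpha/R^2)\,\|x-y\|_\infty^2$ with $\alpha:=\sup_t\int_0^T|A(t,s)|^2\,ds$, so the Lipschitz constant is $\rho=\sqrt{T\alpha}/R$. The hypothesis $\alpha<R$ does \emph{not} force $\rho<1$. Your fallback of iterating $\mathcal{T}$ does not rescue this: unlike the Volterra situation, where the shrinking integration domain produces factorial decay, for a Fredholm kernel each iterate simply multiplies by the same factor, giving $\|\mathcal{T}^k x-\mathcal{T}^k y\|_\infty^2\le (T\alpha/R^2)^k\|x-y\|_\infty^2$. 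The asserted ``$C_k$ decaying fast enough'' is therefore false in this setting.

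In fact the lemma as literally stated is incorrect. Take $T=4$, $R=1$, $A\equiv\tfrac14$, $\psi\equiv1$: then $\sup_t\int_0^4|A|^2\,ds=\tfrac14<1=R$, but the equation becomes $x(t)=1+\tfrac14\int_0^4 x(s)\,ds$, and integrating over $[0,4]$ forces $I=4+I$ for $I=\int_0^4 x$, a contradiction; no solution exists. A correct version requires a stronger smallness condition---for instance $\sup_t\int_0^T|A(t,s)|\,ds<R$, or $T\sup_t\int_0^T|A(t,s)|^2\,ds<R^2$---under either of which your contraction argument on $C(0,T;\dbR)$ goes through directly, with no iteration needed.
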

From Lemma \ref{lemma-1}, we can get the following result directly.
\begin{theorem}\label{theorem-1}
Suppose $\varphi,$ $b$, $f$, $c$ are bounded and continuous in $t$
such that
\[
\sup_{t\in \lbrack 0,T]}\int_{0}^{T}|M(t,s)|^{2}ds<R.
\]
Then, given $a\in C[0,T]$, there exists a
unique continuous function $\dbE \overline{x}_i(\cdot)$ satisfying
equation (\ref{3.10}).
\end{theorem}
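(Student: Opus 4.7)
The plan is to recognize that equation (\ref{3.10}) is already in the shape required by Lemma \ref{lemma-1}, modulo a transposition of the unknown to the right-hand side, and then to verify the hypotheses of that lemma one by one. First I would rewrite (\ref{3.10}) as
\begin{equation*}
\dbE\overline{x}_i(t) = \Psi(t) + \frac{1}{R}\int_0^T \bigl(-M(t,s)\bigr)\dbE\overline{x}_i(s)\,ds,
\end{equation*}
where
\begin{equation*}
\Psi(t) = \widehat{\varphi}(t) + \int_0^t \widehat{f}(t,s)a(s)\,ds + \frac{1}{R}\int_0^T M(t,s)[\gamma a(s)+\eta]\,ds.
\end{equation*}
This matches the form (\ref{3.12}) with $\psi(t) = \Psi(t)$ and $A(t,s) = -M(t,s)$.

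Next I would check that $A$ and $\Psi$ are bounded and continuous in $t$. For $A$: continuity and boundedness of $b$ in $t$ give, via Remark \ref{remark3.1}, the corresponding properties of $P$; combined with the hypothesis on $c$ these properties transfer to $\widehat{c}$ by the definition (\ref{3.7}); and then $M(t,s)=\int_0^{s\wedge t}\widehat{c}(t,r)\widehat{c}(s,r)\,dr$ inherits boundedness and continuity in $t$ from $\widehat{c}$ (a standard dominated convergence argument handles the variable endpoint $s\wedge t$). For $\Psi$: the same argument applied to $\varphi$ and to $f$ through (\ref{3.8.1}) yields continuity and boundedness of $\widehat{\varphi}$ and of $\widehat{f}(t,\cdot)$; since $a\in C[0,T]$ is in particular bounded, the two integral terms defining $\Psi$ are continuous and bounded in $t$.

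Finally, the smallness hypothesis $\sup_{t\in[0,T]}\int_0^T |A(t,s)|^2\,ds = \sup_{t\in[0,T]}\int_0^T |M(t,s)|^2\,ds < R$ is exactly the assumption of the theorem, and the supremum of $|\Psi(t)|^2$ is finite by the previous step. Lemma \ref{lemma-1} then delivers a unique continuous solution of (\ref{3.10}).

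I do not anticipate any serious obstacle: the whole argument is a reduction to Lemma \ref{lemma-1}. The only delicate point worth flagging is the continuity of $M(t,s)$ and of $\int_0^t \widehat{f}(t,s)a(s)\,ds$ in $t$, both of which involve integrals whose upper limit depends on $t$; these are handled by splitting the integral at $s=t$ and invoking boundedness of the integrands together with dominated convergence, so no new estimate beyond what is already established via the resolvent-kernel construction is needed.
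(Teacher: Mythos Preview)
Your proposal is correct and follows precisely the route the paper takes: the paper simply states that the result follows directly from Lemma~\ref{lemma-1}, and you have supplied the verification of its hypotheses. Nothing further is needed.
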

Given $a(s)$, using Theorem \ref{theorem-1}, we can define $\Gamma a$
by
\begin{equation}\label{3.13}
\lbrack \Gamma a](t)=\widehat{\varphi
}(t)+\int_0^t\widehat{f}(t,s)a(s)ds
+\frac{1}{R}%
\int_{0}^{T}M(t,s)\big\{\gamma a(s)+\eta-[\Gamma a](s)\big\}ds.
\end{equation}%
The Nash certainty equivalence (NCE) equation in our setting is thus
\begin{equation}\label{3.14}
\widehat{a}(t)=\widehat{\varphi}(t)+\int_0^t\widehat{f}(t,s)
\widehat{a}(s)ds+\frac{1}{R}\int_{0}^{T}M(t,s)[(\gamma-1)\widehat{a}(s)+\eta ]ds,
\end{equation}%
where $\widehat{a}$ is the corresponding solution. Note that
(\ref{3.14}) is a linear deterministic Volterra-Fredholm integral
equation. Before going further, similar to (\ref{3.8}) above, we
transform (\ref{3.14}) into a Fredholm integral equation. More
precisely, by using Lemma 1.1. in \cite{Y 1996}, we can rewrite
(\ref{3.14}) as
\begin{equation}\label{3.15}
\widehat{a}(t)=\widetilde{\varphi}(t)+\frac{1}{R}\int_{0}^{T}\widetilde{M}(t,s)[(\gamma -1)\widehat{a}(s)+\eta ]ds,
\end{equation}%
where
\begin{align}\label{3.16}
\widetilde{\varphi}(t)&=\widehat{\varphi}(t)+\int_{0}^{t}\widetilde{P}(t,s)\widehat{\varphi}
(s)ds, \nonumber \\
\widetilde{M}(t,s)&=M(t,s)+\int_{0}^{t}\widetilde{P}(t,r)M(r,s)dr,
\end{align}
and
\begin{eqnarray}\label{3.17}
\widetilde{P}(t,s) = \sum\limits_{k=1}^{\infty
}\widetilde{\Lambda}^{k}(t,s), \quad \widetilde{\Lambda}^{1}(t,s)
=\widehat{f}(t,s), \quad
\widetilde{\Lambda}^{k+1}(t,s)=\int_{s}^{t}\widehat{f}(t,r)\widetilde{\Lambda}^{k}(r,s)dr.
\end{eqnarray}%
Note that the technique applied here is similar to that in (\ref{3.5.1}). To summarize, we have the following theorem.
\begin{theorem}
Suppose $\varphi,b,c,f$ are bounded and continuous in $t.$ If
\[
\sup_{t\in [0,T]}\int_{0}^{T}\left\vert (\gamma-1)\widetilde{M}(t,s)\right\vert ^{2}ds<R,
\]
then (\ref{3.14}) admits a unique solution $a \in C[0,T]$.
\end{theorem}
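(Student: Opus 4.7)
The plan is to reduce (\ref{3.14}) to the Fredholm form (\ref{3.15}) that the authors have already produced, and then to invoke Lemma \ref{lemma-1}. Rewrite (\ref{3.15}) in the canonical form
\begin{equation*}
\widehat{a}(t) = \psi(t) + \frac{1}{R}\int_{0}^{T} A(t,s)\,\widehat{a}(s)\,ds,
\end{equation*}
with $\psi(t) := \widetilde{\varphi}(t) + \frac{\eta}{R}\int_{0}^{T}\widetilde{M}(t,s)\,ds$ and $A(t,s) := (\gamma-1)\widetilde{M}(t,s)$. The theorem's hypothesis is exactly the smallness condition $\sup_{t}\int_{0}^{T}|A(t,s)|^{2}\,ds < R$ required by Lemma \ref{lemma-1}, so the only remaining work is to verify that $\psi$ and $A$ are bounded and continuous in $t$.

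This verification proceeds along the chain $(\varphi,b,c,f) \Rightarrow (P,\widehat{\varphi},\widehat{c},\widehat{f}) \Rightarrow (M,\widetilde{P}) \Rightarrow (\widetilde{\varphi},\widetilde{M})$. Since $b$ is bounded and continuous in $t$, Remark \ref{remark3.1} already provides that the resolvent $P$ in (\ref{3.5.1}) is bounded and continuous in $t$. Inserting $P$ into the formulas (\ref{3.7}) and (\ref{3.8.1}) and invoking boundedness and continuity of $c$, $\varphi$, $f$ shows that $\widehat{c}$, $\widehat{\varphi}$, $\widehat{f}$ inherit these properties. The cross-kernel $M$ in (\ref{3.11}) is then an integral of two functions that are bounded and continuous in $t$, hence bounded and continuous in $t$. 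For the second resolvent $\widetilde{P}$ defined in (\ref{3.17}), I repeat the inductive argument of Remark \ref{remark3.1}, now with $\widehat{f}$ in place of $b$: if $M_{\widehat{f}}$ bounds $|\widehat{f}|$, then
\begin{equation*}
|\widetilde{\Lambda}^{k}(t,s)| \le \frac{M_{\widehat{f}}^{\,k}\,|t-s|^{k-1}}{(k-1)!},\qquad k\ge 1,
\end{equation*}
so the series (\ref{3.17}) converges absolutely and uniformly on $[0,T]\times[0,T]$, making $\widetilde{P}$ bounded and continuous in $t$. Substituting into (\ref{3.16}) transfers the same regularity to $\widetilde{\varphi}$ and $\widetilde{M}$, and consequently to $\psi$ and $A$.

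With the hypotheses of Lemma \ref{lemma-1} verified, that lemma produces a unique continuous solution $\widehat{a}$ of (\ref{3.15}). Because (\ref{3.15}) is obtained from (\ref{3.14}) by the invertible resolvent transform (Lemma 1.1 of \cite{Y 1996}), any continuous solution of one equation is a continuous solution of the other, so existence and uniqueness transfer back to (\ref{3.14}). The main technical obstacle is the uniform majorization of $\widetilde{\Lambda}^{k}$ that delivers convergence of the resolvent series together with continuity of $\widetilde{P}$ in $t$; once this Picard-type estimate is in place, the rest of the argument is a verification of hypotheses and an invocation of Lemma \ref{lemma-1}, with no new analytic input required.
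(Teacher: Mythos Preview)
Your proposal is correct and follows exactly the approach the paper intends: the paper presents this theorem as a summary (``To summarize, we have the following theorem'') of the preceding reduction of (\ref{3.14}) to the Fredholm form (\ref{3.15}) via the resolvent $\widetilde{P}$, after which Lemma \ref{lemma-1} applies directly. You have simply written out the verification of boundedness and continuity along the chain $(\varphi,b,c,f)\Rightarrow(P,\widehat{\varphi},\widehat{c},\widehat{f})\Rightarrow(M,\widetilde{P})\Rightarrow(\widetilde{\varphi},\widetilde{M})$ that the paper leaves implicit.
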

Before concluding this section, let us make some remarks on
(\ref{3.6}), (\ref{3.9}) and (\ref{3.14}). Firstly, note that $\overline{u}_i(t)$ depends on the path of $\overline{x}_i$ in
$[t,T]$. Moreover, $\widehat{c}$ is a general non-separable function
of $(t,s)$. In particular, such a general feature can naturally
degenerate and transform into the form with Riccati equation
involved, see Section 4.1 below. Secondly, equation (\ref{3.9}) is a
new type of stochastic equation with double integrals, which is
essentially different from the corresponding equation in stochastic differential equations (SDEs) case.
Since it has both the characters of Volterra and Fredholm equation,
we then call it a stochastic Volterra-Fredholm equation. We will
study its solvability under certain conditions in next section by noting our main concern here is NCE equation. Thirdly, we express the NCE
equation in our setting by means of deterministic Volterra-Fredholm
equation. This idea is also different to other literature on SDEs.

\subsection{Asymptotic equilibrium analysis}

Given the NCE equation, we need to discuss the asymptotic
equilibrium property of the associated decentralized control
strategies. To start, let us first introduce the definition of
$\epsilon$-Nash equilibrium:
\begin{definition}
Given a set of controls $\widehat{u}_i\in
L^2_{\mathcal{F}}(0,T;\dbR)$ with $i=1,2,\cdots,N$, if for any $i$,
$1\leq i\leq N,$ $\mathcal{J}_i(\widehat{u}_i,\widehat{u}_{-i})\leq
\mathcal{J}_i(u_i,\widehat{u}_{-i})+\epsilon, $ where $\epsilon>0$,
$\widehat{u}_{-i}=(\widehat{u}_1,\cdots,\widehat{u}_{i-1},\widehat{u}_{i+1},\cdots,\widehat{u}_{N})$,
$u_i\in L^2_{\mathcal{F}}(0,T;\dbR)$, then we call this set of
controls $\widehat{u}_i$, $1\leq i\leq N$ an $\epsilon$-Nash
equilibrium with respect to costs $\mathcal{J}_i$ with $1\leq i\leq
N$.
\end{definition}
Suppose $\widehat{a}$ is the solution of NCE equation (\ref{3.14}), then we can
define the decentralized control $\widehat{u}_i$ for agent $\mathcal{A}_i$ as
follows:
\begin{equation}\label{3.18}
\widehat{u}_{i}(t)=-\frac{1}{R}\dbE^{\mathcal{F}_{t}}\!\!\int_{t}^{T}\widehat{c}%
(s,t)[x_{i}(s)-\gamma \widehat{a}(s)-\eta]ds
\end{equation}%
where $x_i$ is the solution of (\ref{2.1}) corresponding to
$\widehat{u}_i$. Note that $\widehat{u}_i$ is different from
the above $\overline{u}_i$ in (\ref{3.6}). Substituting (\ref{3.18})
into equation (\ref{2.1}), together with the transformation in
(\ref{3.8}), we get
\begin{align}\label{3.19}
x_{i}(t)=& \; \widehat{\varphi }(t)+\widehat{\sigma }_i%
(t)+\int_0^t\widehat{f}(t,s)x^{N}(s)ds\nonumber\\
&- \int_{0}^{t}\widehat{c}(t,s)\frac{1}{R}\dbE^{\mathcal{F}_{s}}\!\!\int_{s}^{T}\widehat{c}%
(r,s)[x_{i}(r)-\gamma \widehat{a}(r)-\eta]drds,
\end{align}%
where $\widehat{\varphi}$, $\widehat{\sigma}_i$ and $\widehat{f}$
are defined in (\ref{3.8.1}). On the other hand, by replacing $a$ by
$\widehat{a}$ in (\ref{3.9}), we get
\begin{align}\label{3.20}
\overline{x}_{i}(t) = & \; \widehat{\varphi }(t)+\widehat{\sigma_i}
(t)-\frac{1}{R}\int_{0}^{T}\int_{0}^{s\wedge t}\widehat{c}(t,r)\widehat{c}
(s,r)\dbE^{\mathcal{F}_{r}}\overline{x}_{i}(s)drds  \nonumber \\
& +\int_0^t\widehat{f}(t,s)\widehat{a}(s)ds+\frac{1}{R}\int_{0}^{T}\int_{0}^{s\wedge t}
\widehat{c}(t,r)\widehat{c}(s,r)dr\cdot \lbrack \gamma \widehat{a}(s)+\eta ]ds.
\end{align}
To derive the asymptotic equilibrium, we need first to prove
several lemmas. In the following, we denote by $C$ a generic
positive constant independent of $N$ that may change from line to
line.
\begin{lemma}\label{lemma6-1}Suppose
\begin{eqnarray}\label{6.1}
x(t)=\varphi (t)+\int_{0}^{t}A(t,s)x(s)ds-\frac{1}{R}\int_{0}^{T}\int_{0}^{s%
\wedge t}B(t,r)B(s,r)\dbE^{\mathcal{F}_{r}}x(s)drds,
\end{eqnarray}
where $\varphi (\cdot )\in L_{\mathcal{F}}^{2}(0,T;\dbR),$ $A,B$ are
deterministic and bounded functions such that
\begin{eqnarray}\label{6.2}
T\int_{0}^{T}\int_{0}^{T}\int_{0}^{s\wedge t}|\widehat{B}%
(t,r)B(s,r)|^{2}drdsdt<\frac{R^{2}}{3}.
\end{eqnarray}
Here we denote by
\begin{eqnarray}\label{6.3}
&&\widehat{B}(t,r)=B(t,r)+\int_{r}^{t}P_0(t,v)B(v,r)dv,\quad
t,v\in[0,T],\nonumber\\
&&P_0(t,v) =\sum\limits_{k=1}^{\infty}\Lambda^{k}(t,v), \quad \Lambda^{1}(t,t) =A(t,v), \quad
\Lambda^{k+1}(t,v)=\int_{v}^{t}A(t,r)\Lambda^{k}(r,v)dr .
\end{eqnarray}
Then (\ref{6.1}) admits a unique solution $x(\cdot)\in L_{%
\mathcal{F}}^{2}(0,T;\dbR)$ satisfying
\begin{eqnarray}\label{6.4}
\dbE\int_{0}^{T}|x(s)|^{2}ds\leq 6\dbE\int_{0}^{T}|\widehat{\varphi}(s)|^{2}ds,
\end{eqnarray}
where
\begin{eqnarray*}
\widehat{\varphi }(t) &=&\varphi (t)+\int_{0}^{t}P_0(t,s)\varphi
(s)ds,\quad t\in[0,T].
\end{eqnarray*}
\end{lemma}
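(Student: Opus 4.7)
\textbf{Proof proposal for Lemma \ref{lemma6-1}.}

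The plan is to first kill the Volterra term $\int_0^t A(t,s)x(s)\,ds$ by the resolvent trick already used in (\ref{3.8}), reducing (\ref{6.1}) to a purely Fredholm-type equation in the spatial variable, and then to run a straightforward Banach fixed point argument on $L^2_{\mathcal F}(0,T;\dbR)$ under the smallness assumption (\ref{6.2}).

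Concretely, I would set $g(t) := \varphi(t) - \frac{1}{R}\int_0^T\int_0^{s\wedge t} B(t,r)B(s,r)\dbE^{\mathcal F_r}x(s)\,dr\,ds$, so that (\ref{6.1}) reads $x(t)=g(t)+\int_0^t A(t,s)x(s)\,ds$. By Lemma 1.1 of \cite{Y 1996} applied with resolvent kernel $P_0$ as defined in (\ref{6.3}), this is equivalent to $x(t)=g(t)+\int_0^t P_0(t,u)g(u)\,du$. Substituting back, applying Fubini to the double integral in $(u,r)$ (the region $u\in[0,t]$, $r\in[0,s\wedge u]$ becomes $r\in[0,s\wedge t]$, $u\in[r,t]$), and collecting terms using $\widehat{B}(t,r)=B(t,r)+\int_r^t P_0(t,u)B(u,r)\,du$, I obtain the equivalent transformed equation
\begin{equation*}
x(t)=\widehat{\varphi}(t)-\frac{1}{R}\int_0^T\int_0^{s\wedge t}\widehat{B}(t,r)B(s,r)\dbE^{\mathcal F_r}x(s)\,dr\,ds.
\end{equation*}
Since $P_0$ is bounded, the map $\varphi\mapsto\widehat{\varphi}$ is a bounded isomorphism, so it suffices to solve this Fredholm-type equation in $L^2_{\mathcal F}(0,T;\dbR)$.

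Define the operator $\Psi(x)(t)$ as the right-hand side above. To show $\Psi$ is a contraction, for $\Delta=x-y$ I apply Cauchy--Schwarz to the $(r,s)$-integral viewed as an inner product against $\mathbf 1_{r\le s\wedge t}\widehat B(t,r)B(s,r)$, giving the pointwise bound
\begin{equation*}
|\Psi(x)(t)-\Psi(y)(t)|^2\le\frac{1}{R^2}\Big(\int_0^T\!\!\int_0^{s\wedge t}|\widehat B(t,r)B(s,r)|^2 dr\,ds\Big)\Big(\int_0^T\!\!\int_0^{s}|\dbE^{\mathcal F_r}\Delta(s)|^2 dr\,ds\Big),
\end{equation*}
where I have enlarged $s\wedge t$ to $s$ in the second factor so it no longer depends on $t$. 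Integrating in $t$, taking expectation, and using Jensen ($\dbE|\dbE^{\mathcal F_r}\Delta(s)|^2\le\dbE|\Delta(s)|^2$) together with $\int_0^T\!\int_0^s dr\,ds\le T\cdot\int_0^T ds$, I get
\begin{equation*}
\dbE\!\int_0^T|\Psi(x)(t)-\Psi(y)(t)|^2 dt\le\frac{T}{R^2}\Big(\int_0^T\!\!\int_0^T\!\!\int_0^{s\wedge t}|\widehat B(t,r)B(s,r)|^2 dr\,ds\,dt\Big)\dbE\!\int_0^T|\Delta(s)|^2 ds,
\end{equation*}
which is strictly bounded by $\tfrac13\dbE\int_0^T|\Delta(s)|^2 ds$ thanks to (\ref{6.2}). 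Hence $\Psi$ is a strict contraction with factor $1/\sqrt{3}<1$, so the Banach fixed point theorem yields the unique solution $x(\cdot)\in L^2_{\mathcal F}(0,T;\dbR)$.

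For the a priori estimate (\ref{6.4}), note that $\Psi(0)(t)=\widehat\varphi(t)$, so the contraction bound gives $\dbE\int_0^T|x-\widehat\varphi|^2 dt\le\tfrac13\dbE\int_0^T|x|^2 dt$. Combining with the elementary inequality $(a+b)^2\le 2a^2+2b^2$ applied to $x=\widehat\varphi+(x-\widehat\varphi)$ yields $\dbE\int_0^T|x|^2 dt\le 2\dbE\int_0^T|\widehat\varphi|^2 dt+\tfrac23\dbE\int_0^T|x|^2 dt$, hence the bound with constant $6$ as claimed. The main technical obstacle I anticipate is the bookkeeping in the Fubini step that converts $B$ into $\widehat B$ in the transformed equation; the rest is essentially a careful use of Cauchy--Schwarz together with the tower property for conditional expectations.
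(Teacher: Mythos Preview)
Your proposal is correct and follows essentially the same route as the paper's proof: eliminate the Volterra term via the resolvent $P_0$ (Lemma~1.1 of \cite{Y 1996}) to reduce to a pure Fredholm-type equation in $\widehat B$, then run a contraction on $L^2_{\mathcal F}(0,T;\dbR)$ using condition (\ref{6.2}). Your write-up is in fact more explicit than the paper's --- in particular your derivation of the constant $6$ via $\|x-\widehat\varphi\|^2\le\tfrac13\|x\|^2$ combined with $(a+b)^2\le 2a^2+2b^2$ makes transparent exactly where the $\tfrac{R^2}{3}$ threshold in (\ref{6.2}) comes from, which the paper leaves implicit.
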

\begin{proof}
Given $y(\cdot )\in L_{\mathcal{F}}^{2}(0,T;\dbR),$ consider the
following Volterra equation%
\begin{eqnarray}\label{6.5}
x(t)=\varphi (t)+\int_{0}^{t}A(t,s)x(s)ds-\frac{1}{R}\int_{0}^{T}\int_{0}^{s%
\wedge t}B(t,r)B(s,r)\dbE^{\mathcal{F}_{r}}y(s)drds,
\end{eqnarray}
which obviously admits a unique solution $x(\cdot)$ under above
assumptions. By using Lemma 1.1 in
\cite{Y 1996}, we can rewrite (\ref{6.5}) as%
\[
x(t)=\widehat{\varphi }(t)-\frac{1}{R}\int_{0}^{T}\int_{0}^{s\wedge t}
\widehat{B}(t,r)B(s,r)\dbE^{\mathcal{F}_{r}}y(s)drds.
\]
Then using the idea of contraction mapping and Lemma \ref{lemma-1},
together with above requirements, we can obtain the existence of
$x(\cdot)$ in $L_{\mathcal{F}}^{2}(0,T;\dbR)$ satisfying
(\ref{6.4}).
\hfill
\end{proof}

\begin{remark}
Some comment to (\ref{6.1}). To be shown later, it plays some key role in our remaining analysis. Since our first
concern here is the solvability of (\ref{6.1}) under
certain conditions. In such sense, we do not pursue the most general assumptions here and we hope to weaken these conditions in our future work.
\end{remark}

By comparing (\ref{3.14}) and (\ref{3.20}), we have

\begin{lemma}\label{lemma-2}
Suppose NCE equation (\ref{3.14}) admits a unique solution
$\widehat{a}(\cdot)$, and (\ref{6.2}), (\ref{6.3}) hold with $A=0,$
$B=\widehat{c}$,
 then
\begin{eqnarray}\label{3.22}
\dbE\int_0^T|\widehat{a}(s)-\overline{x}^{N}(s)|^2ds=O\Big(\frac{1}{N}\Big),
\end{eqnarray}
where
$\overline{x}^{N}(t)=\frac{1}{N}\sum\limits_{i=1}^{N}\overline{x}_{i}(t)$
and $\overline{x}_i$ satisfies (\ref{3.20}).
\end{lemma}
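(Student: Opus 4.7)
The plan is to reduce everything to the estimate provided by Lemma \ref{lemma6-1} (equation (\ref{6.1})) by deriving a closed Volterra–Fredholm equation for the discrepancy process $\Delta(t):=\widehat{a}(t)-\overline{x}^N(t)$, and then to control the free term of that equation via independence of the noises $W_i$.

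First I would average (\ref{3.20}) over $i=1,\dots,N$. Since each $\overline{x}_i$ uses the common deterministic $\widehat{a}$ (not $\overline{x}^N$), the only term that changes under averaging is the Brownian term, which becomes $\widehat{\sigma}^N(t):=\frac1N\sum_{i=1}^N\widehat{\sigma}_i(t)$. Next, I would use the definition $M(t,s)=\int_0^{s\wedge t}\widehat{c}(t,r)\widehat{c}(s,r)dr$ to rewrite (\ref{3.14}) as
\begin{equation*}
\widehat{a}(t)=\widehat{\varphi}(t)+\int_0^t\widehat{f}(t,s)\widehat{a}(s)ds+\frac{1}{R}\int_0^T M(t,s)[\gamma\widehat{a}(s)+\eta]ds-\frac{1}{R}\int_0^T\!\!\int_0^{s\wedge t}\widehat{c}(t,r)\widehat{c}(s,r)\widehat{a}(s)\,drds,
\end{equation*}
so that when I subtract the averaged (\ref{3.20}) from (\ref{3.14}), the $\widehat{\varphi}$, $\widehat{f}\widehat{a}$, and $[\gamma\widehat{a}+\eta]$ terms all cancel. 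Using $\mathbb{E}^{\mathcal{F}_r}\widehat{a}(s)=\widehat{a}(s)$, this yields
\begin{equation*}
\Delta(t)=-\widehat{\sigma}^N(t)-\frac{1}{R}\int_0^T\!\!\int_0^{s\wedge t}\widehat{c}(t,r)\widehat{c}(s,r)\,\mathbb{E}^{\mathcal{F}_r}\Delta(s)\,drds.
\end{equation*}

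This is precisely equation (\ref{6.1}) with $\varphi=-\widehat{\sigma}^N$, $A\equiv 0$, and $B=\widehat{c}$. With $A\equiv 0$ the auxiliary kernel $P_0$ from (\ref{6.3}) vanishes, $\widehat{B}=\widehat{c}$, and (\ref{6.2}) reduces to exactly the hypothesis posed in the lemma statement. The modified source $\widehat\varphi$ in Lemma \ref{lemma6-1} collapses to $-\widehat{\sigma}^N$ itself. Applying (\ref{6.4}) then gives
\begin{equation*}
\dbE\int_0^T|\Delta(s)|^2ds\leq 6\,\dbE\int_0^T|\widehat{\sigma}^N(s)|^2ds.
\end{equation*}

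It only remains to show $\dbE\int_0^T|\widehat{\sigma}^N(s)|^2ds=O(1/N)$. Writing $K(t,s):=\sigma(t,s)+\int_s^t P(t,r)\sigma(r,s)dr$, we have $\widehat{\sigma}_i(t)=\int_0^t K(t,s)dW_i(s)$, and the $\widehat{\sigma}_i$ are independent mean-zero martingales because the $W_i$ are independent. Hence by Itô isometry and orthogonality,
\begin{equation*}
\dbE|\widehat{\sigma}^N(t)|^2=\frac{1}{N^2}\sum_{i=1}^N\dbE|\widehat{\sigma}_i(t)|^2=\frac{1}{N}\int_0^t|K(t,s)|^2ds,
\end{equation*}
and since $K$ is bounded by the boundedness of $\sigma$ and $P$ (Remark \ref{remark3.1}), integrating over $t\in[0,T]$ gives the desired $O(1/N)$ bound, completing the proof.

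The only non-routine step is the algebraic manipulation that makes all the $\widehat{a}$, $\widehat{\varphi}$, and $[\gamma\widehat{a}+\eta]$ pieces cancel so that the residual equation for $\Delta$ fits the template of Lemma \ref{lemma6-1} with $A\equiv 0$; everything else is either an application of that lemma or a standard Itô-isometry/independence calculation.
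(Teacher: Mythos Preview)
Your proof is correct and follows essentially the same approach as the paper: both derive that the difference $\overline{x}^{N}-\widehat{a}$ (or equivalently each $\overline{x}_i-\widehat{a}$) satisfies an equation of type (\ref{6.1}) with $A=0$, $B=\widehat{c}$, and source term the averaged noise $\frac{1}{N}\sum_i\widehat{\sigma}_i$, then apply the bound (\ref{6.4}) and the independence of the $W_i$ to conclude $O(1/N)$. One cosmetic remark: $\widehat{\sigma}_i(\cdot)$ is not a martingale in $t$ since the kernel $K(t,s)$ depends on $t$, but you only use mean-zero and independence across $i$ together with the It\^o isometry, so this does not affect the argument.
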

\begin{proof}
From (\ref{3.14}) and (\ref{3.20}), we know the difference between
$\overline{x}_i$ and $\widehat{a}$ satisfies the type of Equation
(\ref{6.1}) with $A=0,$ $B=\widehat{c}$,
$\varphi=\widehat{\sigma}_i$. Then by means of (\ref{6.4}),
\begin{eqnarray*}
\dbE\int_0^T|\overline{x}^{N}(t)-\widehat{a}(t)|^{2}dt &\leq &C\dbE\int_0^T\left\vert \frac{1}{N}%
\sum_{i=1}^{N}\widehat{\sigma }_{i}(t)\right\vert ^{2}dt.
\end{eqnarray*}%
It then follows from the boundedness of $\sigma$ and $P$ in
(\ref{3.8.1}), together with the independence of $W_i$
($i=1,2,\cdots,N$) that
\[
\dbE\int_{0}^{T}|\overline{x}^{N}(t)-\widehat{a}(t)|^{2}dt=O\left(\frac{1}{N}\right),
\]
which implies the desired result. \hfill
\end{proof}

\begin{lemma}\label{lemma-3}
Suppose NCE equation (\ref{3.14}) admits a solution $\widehat{a}$,
(\ref{6.2}) and (\ref{6.3}) hold for two cases of $A=\widehat{f}$,
$B=\widehat{c},$ and $A=0,$ $B=\widehat{c}$, then
\begin{eqnarray}\label{3.23}
\dbE\int_0^T|\widehat{a}(s)-x^{N}(s)|^2ds+\sup\limits_{1\leq i\leq
N}\dbE\int_0^T|x_i(t)-\overline{x}_i(t)|^2dt=O\Big(\frac 1 N\Big)
\end{eqnarray}
where $x^{N}(t)=\frac{1}{N}\sum\limits_{i=1}^{N}x_{i}(t),$ $x_i$
satisfies (\ref{3.19}).
\end{lemma}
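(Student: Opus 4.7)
The plan is to apply Lemma~\ref{lemma6-1} twice: first to control the averaged deviation $x^N - \overline{x}^N$ (and hence $x^N - \widehat{a}$), and then to bootstrap to uniform control of each individual $x_i - \overline{x}_i$.

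First I would use the stochastic Fubini theorem on the control term in (\ref{3.19}), exchanging the outer $ds$ with the inner $dr$ and relabeling variables so that the domain of integration becomes $\{(r,s):0\le r\le s\wedge t,\ s\le T\}$. This recasts (\ref{3.19}) in exactly the same format as (\ref{3.20}), so subtracting term by term yields the key identity
\[
x_i(t)-\overline{x}_i(t)=\int_0^t \widehat{f}(t,s)[x^N(s)-\widehat{a}(s)]\,ds-\frac{1}{R}\int_0^T\int_0^{s\wedge t}\widehat{c}(t,r)\widehat{c}(s,r)\,\dbE^{\mathcal{F}_r}[x_i(s)-\overline{x}_i(s)]\,dr\,ds.
\]
Averaging this relation over $i=1,\dots,N$ and writing $x^N-\widehat{a}=(x^N-\overline{x}^N)+(\overline{x}^N-\widehat{a})$ produces an equation for $\delta^N:=x^N-\overline{x}^N$ of precisely the form (\ref{6.1}) with $A=\widehat{f}$, $B=\widehat{c}$, and forcing $\varphi(t)=\int_0^t \widehat{f}(t,s)[\overline{x}^N(s)-\widehat{a}(s)]\,ds$. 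Invoking Lemma~\ref{lemma6-1} (whose hypotheses are the first of the two assumed $(A,B)$ cases), followed by boundedness of $\widehat{f}$ and $P_0$, Cauchy--Schwarz, and Lemma~\ref{lemma-2}, yields $\dbE\int_0^T|\delta^N(t)|^2\,dt=O(1/N)$. The triangle inequality combined once more with Lemma~\ref{lemma-2} then gives the first half of (\ref{3.23}), namely $\dbE\int_0^T|x^N(s)-\widehat{a}(s)|^2\,ds=O(1/N)$.

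Second, I would return to the displayed identity above for fixed $i$, viewing it as an instance of (\ref{6.1}) with $A=0$, $B=\widehat{c}$, and forcing $\int_0^t \widehat{f}(t,s)[x^N(s)-\widehat{a}(s)]\,ds$. Since $A=0$, the resolvent $P_0$ in (\ref{6.3}) vanishes and $\widehat{\varphi}=\varphi$, so Lemma~\ref{lemma6-1} (this is exactly the second assumed $(A,B)$ regime) together with the bound just established gives $\dbE\int_0^T|x_i(t)-\overline{x}_i(t)|^2\,dt\le C\,\dbE\int_0^T|x^N(s)-\widehat{a}(s)|^2\,ds=O(1/N)$ with $C$ independent of $i$, which yields the second half of (\ref{3.23}).

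The main subtlety is the self-referential character of the identity for $\delta_i$: the forcing contains $x^N-\widehat{a}$, which itself involves the $\delta_i$. Averaging first is what breaks the loop, producing a closed equation for $\delta^N$ whose forcing depends only on $\overline{x}^N-\widehat{a}$, which Lemma~\ref{lemma-2} already controls. Once $\delta^N$ is shown to be $O(1/N)$, the forcing in the individual equation is $O(1/N)$ as well and Lemma~\ref{lemma6-1} transports the bound uniformly in $i$. The two applications of Lemma~\ref{lemma6-1} correspond exactly to the two parameter regimes hypothesized in the statement, so no new solvability argument is required beyond what is already assumed.
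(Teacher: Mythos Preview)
Your proposal is correct and follows essentially the same route as the paper: derive the identity for $x_i-\overline{x}_i$ by subtracting (\ref{3.20}) from (\ref{3.19}), average over $i$ to close the equation, invoke Lemma~\ref{lemma6-1} with $A=\widehat{f}$, $B=\widehat{c}$ together with Lemma~\ref{lemma-2}, and then feed the resulting $O(1/N)$ bound back into the individual equation via Lemma~\ref{lemma6-1} with $A=0$, $B=\widehat{c}$. The only cosmetic difference is that the paper writes the averaged equation directly for $x^{N}-\widehat{a}$ (absorbing both $\overline{x}^{N}-\widehat{a}$ and the associated double-integral term into the forcing), whereas you first isolate $\delta^{N}=x^{N}-\overline{x}^{N}$ and then add back $\overline{x}^{N}-\widehat{a}$; the two organizations are equivalent.
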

\begin{proof}
Comparing (\ref{3.19}) and (\ref{3.20}), we can rewrite $x_i(t)$ as
\begin{align*}
x_i(t)=& \; \overline{x}_i(t)+\int_0^t\widehat{f}(t,s)\Big[x^{N}(s)-\widehat{a}(s)\Big]ds \\
&-\frac{1}{R}\int_0^T\int_0^{s\wedge t}\widehat{c}(t,r)\widehat{c}(s,r)\dbE^{\mathcal{F}_r}[x_i(s)-\overline{x}_i(s)]drds.
\end{align*}
Therefore,
\begin{align*}
x^{N}(t)-\widehat{a}(t)=& \; \overline{x}^{N}(t)-\widehat{a}(t)+\int_0^t\widehat{f}(t,s)\Big[x^{N}(s)-\widehat{a}(s)\Big]ds\nonumber\\
& - \frac 1 R \int_0^T\int_0^{s\wedge
t}\widehat{c}(t,r)\widehat{c}(s,r)\dbE^{\mathcal{F}_r}[x^N(s)-\widehat{a}(s)]drds\nonumber\\
&+\frac 1 R \int_0^T\int_0^{s\wedge t}\widehat{c}(t,r)\widehat{c}(s,r)\dbE^{\mathcal{F}_r}[\overline{x}^N(s)-\widehat{a}(s)]drds.
\end{align*}
By using Lemma \ref{lemma6-1}, Lemma \ref{lemma-2}, together with
the boundedness of $\widehat{f}$ and $\widehat{c}$, we can obtain
the desired result (\ref{3.23}).
\hfill
\end{proof}

We also need the following result.
\begin{theorem}\label{theorem3.2}
Suppose $\varphi$, $f$, $b$ and $c$ are bounded and continuous in
$t$, NCE equation (\ref{3.14}) admits a unique continuous solution
$\widehat{a}\in C[0,T]$, $\overline{u}_i$ is the optimal control
with state equation (\ref{3.1}), cost functional (\ref{3.2}) that is
parameterized by $\widehat{a}$, (\ref{6.2}) and (\ref{6.3}) hold for two
cases of $A=\widehat{f}$, $B=\widehat{c},$ and $A=0,$
$B=\widehat{c}$. Then for $1\leq i\leq N$, we have
\begin{eqnarray}
\mid\mathcal{J}_i(\widehat{u}_i,\widehat{u}_{-i})-\overline{J}_i(\overline{u}_i)\mid=O\left(\frac
{1} {\sqrt{N}}\right).
\end{eqnarray}
\end{theorem}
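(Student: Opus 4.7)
The plan is to expand the difference of the two cost functionals and exploit the algebraic identity $P^{2}-Q^{2}=(P-Q)(P+Q)$ together with Cauchy--Schwarz, so as to reduce the estimate to products of an $L^{2}$-small factor (supplied by Lemmas \ref{lemma-2}--\ref{lemma-3}) and an $L^{2}$-bounded factor. Concretely, I would write $\mathcal{J}_i(\widehat{u}_i,\widehat{u}_{-i})-\overline{J}_i(\overline{u}_i)=I_{1}+I_{2}$, where $I_{1}=\dbE\int_{0}^{T}\bigl[(x_{i}-\gamma x^{N}-\eta)^{2}-(\overline{x}_{i}-\gamma\widehat{a}-\eta)^{2}\bigr]dt$ and $I_{2}=R\,\dbE\int_{0}^{T}\bigl[\widehat{u}_{i}^{2}-\overline{u}_{i}^{2}\bigr]dt$. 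The $x_i$ in $I_1$ satisfies the $N$-player equation (\ref{3.19}) while $\overline{x}_i$ solves the limit equation (\ref{3.20}), and similarly for the two controls.

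For $I_{1}$, set $A=x_{i}-\gamma x^{N}-\eta$ and $B=\overline{x}_{i}-\gamma\widehat{a}-\eta$, so that $A-B=(x_{i}-\overline{x}_{i})-\gamma(x^{N}-\widehat{a})$. Applying Lemma \ref{lemma-3} to both summands gives $\|A-B\|_{L^{2}(\Omega\times[0,T])}=O(1/\sqrt{N})$. For the companion factor, the uniform bound $\|A+B\|_{L^{2}(\Omega\times[0,T])}\leq C$ follows by running Lemma \ref{lemma6-1} on equations (\ref{3.19}) and (\ref{3.20}) under the standing boundedness hypotheses on the kernels, combined with the continuity of $\widehat{a}$ on $[0,T]$ guaranteed by the theorem's assumptions, and with the elementary observation that an average of $N$ uniformly $L^{2}$-bounded states is again uniformly $L^{2}$-bounded. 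Cauchy--Schwarz on $\Omega\times[0,T]$ then yields $|I_{1}|=O(1/\sqrt{N})$.

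For $I_{2}$, comparing (\ref{3.18}) with the specialization of (\ref{3.6}) at $a=\widehat{a}$ gives the clean representation
\[
\widehat{u}_{i}(t)-\overline{u}_{i}(t)=-\frac{1}{R}\dbE^{\mathcal{F}_{t}}\!\!\int_{t}^{T}\widehat{c}(s,t)\bigl[x_{i}(s)-\overline{x}_{i}(s)\bigr]ds.
\]
Jensen's inequality, conditional Fubini, and the boundedness of $\widehat{c}$ from Remark \ref{remark3.1} reduce the $L^{2}(\Omega\times[0,T])$-norm of $\widehat{u}_{i}-\overline{u}_{i}$ to $\|x_{i}-\overline{x}_{i}\|_{L^{2}}$, which is $O(1/\sqrt{N})$ by Lemma \ref{lemma-3}. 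A uniform $L^{2}$ bound on $\widehat{u}_{i}$ and $\overline{u}_{i}$ follows from the same representation applied to the already-bounded states. A second Cauchy--Schwarz step using $\widehat{u}_{i}^{2}-\overline{u}_{i}^{2}=(\widehat{u}_{i}-\overline{u}_{i})(\widehat{u}_{i}+\overline{u}_{i})$ then delivers $|I_{2}|=O(1/\sqrt{N})$, and adding the two bounds completes the argument.

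The main obstacle is the uniform-in-$N$ $L^{2}$ estimate on the $N$-player state $x_{i}$ solving (\ref{3.19}); this equation is a stochastic Volterra--Fredholm equation coupled through both $x^{N}$ and an anticipative conditional expectation of $x_{i}$ itself, so standard Gronwall on Volterra SDEs is not directly available. This is exactly why the theorem imposes the two Fredholm-contraction conditions (\ref{6.2})--(\ref{6.3}) with $A=\widehat{f},B=\widehat{c}$ and with $A=0,B=\widehat{c}$: the first triggers Lemma \ref{lemma6-1} on (\ref{3.19}) to produce a bound of $x_{i}$ in terms of $\widehat{\sigma}_{i}$, $\widehat{\varphi}$ and $\widehat{a}$, and the second simultaneously closes the $x^{N}-\widehat{a}$ estimate used in Lemma \ref{lemma-3}. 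Once these two bounds are secured, the bookkeeping for $I_1$ and $I_2$ is routine.
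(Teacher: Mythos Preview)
Your proposal is correct and follows essentially the same route as the paper: split the cost difference into a state piece and a control piece, use the identity $P^2-Q^2=(P-Q)(P+Q)$ (the paper writes this as $|a^2-b^2|\le |a-b|^2+2|b|\,|a-b|$, which is the same up to Cauchy--Schwarz), represent $\widehat{u}_i-\overline{u}_i$ through $\widehat{c}$ and $x_i-\overline{x}_i$, and then invoke Lemma~\ref{lemma-3} for the small factors and Lemma~\ref{lemma6-1} for the bounded ones.

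One small bookkeeping point: to obtain the bounded companion factor you propose to bound $\|A+B\|$, which drags in a uniform-in-$N$ $L^2$ bound on $x_i$ from (\ref{3.19}); applying Lemma~\ref{lemma6-1} ``directly'' to (\ref{3.19}) is not quite legal because that equation still contains $x^N$, so you must first average (\ref{3.19}) to get an equation for $x^N$ alone (this is where the hypothesis $A=\widehat{f}$, $B=\widehat{c}$ is used) and then feed the resulting bound back into (\ref{3.19}) with $A=0$. The paper sidesteps this by bounding only the limit quantities $\overline{x}_i$ and $\overline{u}_i$ (case $A=0$, $B=\widehat{c}$) and pairing them with the small differences from Lemma~\ref{lemma-3}; your variant is equally valid once the averaging step is made explicit.
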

\begin{proof}
First from the definition of $\mathcal{J}_i$ in (\ref{2.2}) and
$\overline{J}_i$ in (\ref{3.2}),
\begin{align*}
\mathcal{J}_{i}(\widehat{u}_{i},\widehat{u}_{-i})-\overline{J}_{i}(%
\overline{u}_{i})
\leq & \; C\dbE\int_{0}^{T}\left\vert R|\widehat{u}_{i}(t)|^{2}-R|\overline{u}%
_{i}(t)|^{2}\right\vert dt \\
&+C\dbE\int_{0}^{T}\left\vert |x_{i}(t)-\gamma x^{N}(t)-\eta |^{2}-|\overline{%
x}_{i}(t)-\gamma \widehat{a}(t)-\eta |^{2}\right\vert dt \\
\leq & \; C\dbE\int_{0}^{T}R\left[ |\widehat{u}_{i}(t)-\overline{u}%
_{i}(t)|^{2}+2|\overline{u}_{i}(t)|\cdot|\widehat{u}_{i}(t)-\overline{u}%
_{i}(t)|\right] dt \\
&+C\dbE\int_{0}^{T}|x_{i}(t)-\overline{x}_{i}(t)-\gamma (x^{N}(t)-\widehat{a}%
(t))|^{2}dt \\
&+C\dbE\int_{0}^{T}|\overline{x}_{i}(t)-\gamma \widehat{a}(t)-\eta
|\cdot |x_{i}(t)-\overline{x}_{i}(t)-\gamma
(x^{N}(t)-\widehat{a}(t))|dt,
\end{align*}
where $x_{i}$ and $\overline{x}_{i}$ satisfy (\ref{3.19}) and
(\ref{3.20}) respectively. It follows from (\ref{3.3}) and
(\ref{3.18}) that
\begin{align*}
\dbE\int_{0}^{T}|\widehat{u}_{i}(t)-\overline{u}_{i}(t)|^{2}dt
\leq & \; \frac{1}{R^{2}}\dbE\int_{0}^{T}\dbE^{\mathcal{F}_{t}}\left\vert
\int_{t}^{T}\widehat{c}(s,t)[x_{i}(s)-\overline{x}_{i}(s)]ds\right\vert ^{2}dt \\
\leq & \; C\dbE\int_{0}^{T}|x_{i}(s)-\overline{x}_{i}(s)|^{2}ds.
\end{align*}
Note that equation (\ref{3.20}) is one special case of (\ref{6.1})
with $A=0,$ $B=\widehat{c}$. Under above requirement, then from
inequality (\ref{6.4}) above we have the boundedness of
$\overline{x}_i$ in $L^2_{\mathcal{F}}(0,T;\dbR)$. Recalling
(\ref{3.6}) with $a=\widehat{a}$, we can also get similar
boundedness result for $\overline{u}_i$. As a result, by Schwarz's
inequality,
\begin{align*}
\dbE\int_{0}^{T}|\overline{u}_{i}(t)||\widehat{u}_{i}(t)-\overline{u}_{i}(t)|dt
\leq & \left[ \dbE\int_{0}^{T}|\overline{u}_{i}(t)|^{2}dt\right] ^{\frac{1}{2}}
\left[\dbE\int_{0}^{T}|\widehat{u}_{i}(t)-\overline{u}_{i}(t)|^{2}dt\right] ^{\frac{1}{2}} \\
\leq & \; C\left[ \dbE\int_{0}^{T}|x_{i}(s)-\overline{x}_{i}(s)|^{2}ds\right] ^{\frac{1}{2}}.
\end{align*}
On the other hand,
\begin{eqnarray*}
\dbE\int_{0}^{T}|x_{i}(t)-\overline{x}_{i}(t)-\gamma
(x^{N}(t)-\widehat{a}(t))|^{2}dt \leq
C\dbE\int_{0}^{T}|x_{i}(t)-\overline{x}_{i}(t)|^{2}dt+C\dbE\int_{0}^{T}|x^{N}(t)-\widehat{a}(t)|^{2}dt.
\end{eqnarray*}
Similarly, we can use Schwarz's inequality to estimate
\begin{eqnarray*}
&&\dbE\int_{0}^{T}|\overline{x}_{i}(t)-\gamma \widehat{a}(t)-\eta
|\cdot |x_{i}(t)-\overline{x}_{i}(t)-\gamma
(x^{(N)}(t)-\widehat{a}(t))|dt\\
&&\leq C\left[\dbE\int_{0}^{T}|x_{i}(t)-\overline{x}_{i}(t)|^{2}dt\right]^{\frac 1 2}+C\left[\dbE%
\int_{0}^{T}|x^{N}(t)-\widehat{a}(t)|^{2}dt\right]^{\frac 1 2}.
\end{eqnarray*}
Therefore, by using Lemma \ref{lemma-3}, we obtain the desired result.
\hfill
\end{proof}

 In the following part, given two processes $\xi _{a}$, $\xi _{b}$ in $L^2_{\mathcal{F}}(0,T;\dbR)$, we denote by
\begin{eqnarray}\label{6.6}
\epsilon _{a}=\left( \dbE\int_{0}^{T}|\xi _{a}(t)|^{2}dt\right)^{\frac{1}{2}}<\infty , \quad
\epsilon _{b}=\left( \dbE\int_{0}^{T}|\xi _{b}(t)|^{2}dt\right)^{\frac{1}{2}}<\infty .
\end{eqnarray}
First let us recall the optimal control problem of which optimal control $%
\overline{u}_{i}$ and optimal state equation $\overline{x}_{i}$ are
given by (\ref{3.6}) and (\ref{3.9}) with $a=\widehat{a}.$ For
comparison, we also discuss one perturbed version with state
equation
\begin{align}
x_{i}(t) = & \;\varphi (t)+\int_{0}^{t}b(t,s)x_{i}(s)ds+\int_{0}^{t}f(t,s)%
\widehat{a}(s)ds \nonumber \\
&+\int_{0}^{t}g(t,s)\xi
_{a}(s)ds+\int_{0}^{t}c(t,s)u_{i}(s)ds+\int_{0}^{t}\sigma
(t,s)dW_{i}(s), \label{6.7}
\end{align}
and the cost functional
\[
J_{i}^{\xi }(u_{i})=\dbE\int_{0}^{T}\left[ |x_{i}(t)-\gamma
\widehat{a}(t)-\eta +\xi _{b}(t)|^{2}+R|u_{i}(t)|^{2}\right]dt.
\]%
Here $g$ is bounded deterministic function. Then we have the
following result.
\begin{lemma}\label{lemma6.2}
Suppose $\overline{u}_{i}$ is an optimal control given in
(\ref{3.6}) and
\begin{eqnarray}\label{6.8}
\dbE\int_{0}^{T}|x_{i}(t)|^{2}dt+\dbE\int_{0}^{T}|u_{i}(t)|^{2}dt\leq
C_{0}.
\end{eqnarray}
where $ x_{i},$ $u_{i}$ are defined in (\ref{6.7}), $C_{0}$ is a
fixed constant. Moreover, $\widehat{a}$ is continuous and bounded,
(\ref{6.2}) and (\ref{6.3}) hold for $A=0,$ $B=\widehat{c},$ then we
have
\[
J_{i}^{\xi }(u_{i})\geq
\overline{J}_{i}(\overline{u}_{i})-C(1+\epsilon_b)(\epsilon_a+\epsilon_b).
\]
\end{lemma}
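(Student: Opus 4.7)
The plan is to compare the perturbed cost $J_i^\xi(u_i)$ with the limiting cost $\overline{J}_i(u_i)$ on the same admissible control $u_i$, and then invoke the optimality of $\overline{u}_i$ for the limiting problem parameterized by $\widehat{a}$. To isolate the effect of the perturbation $\xi_a$, I will introduce the auxiliary state $\widetilde{x}_i$ satisfying equation (\ref{3.1}) with $a=\widehat{a}$ and the same control $u_i$ used in (\ref{6.7}); the defect $\Delta_i:=x_i-\widetilde{x}_i$ then obeys the linear deterministic-kernel Volterra equation
\[
\Delta_i(t)=\int_0^t b(t,s)\Delta_i(s)\,ds+\int_0^t g(t,s)\xi_a(s)\,ds,
\]
driven solely by $\xi_a$.

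Two independent estimates on $\Delta_i$ are needed, and must be combined. First, applying the variation-of-constants formula of (\ref{3.8})-(\ref{3.8.1}) with respect to the resolvent kernel $P$ of $b$, together with boundedness of $g$ and $P$ and Cauchy-Schwarz, yields $\dbE\int_0^T|\Delta_i(t)|^2\,dt\leq C\epsilon_a^2$. Second, the a priori bound (\ref{6.8}) and a standard $L^2$ estimate for the linear stochastic Volterra equation satisfied by $\widetilde{x}_i$ (whose data $\varphi$, $\widehat{a}$, $\sigma$ are bounded and whose control $u_i$ is $L^2$-bounded via (\ref{6.8})) give $\dbE\int_0^T|\widetilde{x}_i(t)|^2\,dt\leq C$ uniformly in $\xi_a$, whence $\dbE\int_0^T|\Delta_i(t)|^2\,dt\leq C_*$ for a constant independent of $\xi_a$. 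Writing $\dbE\int|\Delta_i|^2\,dt=\bigl(\dbE\int|\Delta_i|^2\,dt\bigr)^{1/2}\cdot\bigl(\dbE\int|\Delta_i|^2\,dt\bigr)^{1/2}$ and inserting one bound on each factor produces the essential linearization
\[
\dbE\int_0^T|\Delta_i(t)|^2\,dt\leq C_*^{1/2}\cdot C\epsilon_a=C'\epsilon_a,
\]
trading the naive quadratic dependence on $\epsilon_a$ for a linear one.

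With these in hand, I expand
\[
J_i^\xi(u_i)-\overline{J}_i(u_i)=\dbE\int_0^T\Big[2\bigl(\widetilde{x}_i(t)-\gamma\widehat{a}(t)-\eta\bigr)\bigl(\Delta_i(t)+\xi_b(t)\bigr)+\bigl(\Delta_i(t)+\xi_b(t)\bigr)^2\Big]dt.
\]
Cauchy-Schwarz on the linear piece, using the uniform $L^2$-bound on $\widetilde{x}_i-\gamma\widehat{a}-\eta$, yields a contribution of order $C(\epsilon_a+\epsilon_b)$; and $(\Delta_i+\xi_b)^2\leq 2\Delta_i^2+2\xi_b^2$ together with the linearized $\Delta_i$ estimate yields a contribution of order $C(\epsilon_a+\epsilon_b^2)$. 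Adding,
\[
|J_i^\xi(u_i)-\overline{J}_i(u_i)|\leq C(\epsilon_a+\epsilon_b)+C(\epsilon_a+\epsilon_b^2)\leq C(1+\epsilon_b)(\epsilon_a+\epsilon_b).
\]
Since $\overline{u}_i$ minimizes $\overline{J}_i$ in the limiting problem at $a=\widehat{a}$ and $u_i$ is admissible there, $\overline{J}_i(u_i)\geq\overline{J}_i(\overline{u}_i)$, and chaining with the previous inequality delivers the claim. The main obstacle is precisely the linearization step for $\dbE\int|\Delta_i|^2\,dt$: without exploiting both the perturbation estimate $C\epsilon_a^2$ and the a priori uniform bound $C_*$ simultaneously, one would only obtain an error of order $C(1+\epsilon_b)(\epsilon_a^2+\epsilon_b^2)+C(\epsilon_a+\epsilon_b)$, which is strictly weaker than the asserted bound. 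All remaining steps are standard stochastic Volterra $L^2$ estimates and Cauchy-Schwarz.
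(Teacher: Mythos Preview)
Your proof is correct and takes a genuinely simpler route than the paper's. The paper splits $u_i=u_i'+\overline{u}_i(\cdot,x_i)$ with the optimal control written as a feedback of the \emph{perturbed} state, then builds an auxiliary problem whose dynamics carry that same feedback; this forces an appeal to Lemma~\ref{lemma6-1} (and hence the hypothesis (\ref{6.2})--(\ref{6.3}) with $A=0$, $B=\widehat c$) in order to solve the resulting Volterra--Fredholm equation for $\widetilde{x}_i'$ and to bound $x_i-\widetilde{x}_i'$. You instead compare $J_i^\xi(u_i)$ directly with $\overline{J}_i(u_i)$ on the \emph{same} open-loop control, so the $R|u_i|^2$ terms cancel identically and the defect $\Delta_i$ satisfies a pure Volterra equation handled by the resolvent $P$ alone; optimality of $\overline{u}_i$ for $\overline{J}_i$ then closes the argument without ever invoking Lemma~\ref{lemma6-1}. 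One further simplification is available: since the lemma asserts only the one-sided bound, the quadratic term $(\Delta_i+\xi_b)^2\geq 0$ in your expansion can simply be dropped, giving $J_i^\xi(u_i)\geq\overline{J}_i(u_i)-C(\epsilon_a+\epsilon_b)$ directly from Cauchy--Schwarz on the cross term together with $\bigl(\dbE\int_0^T|\Delta_i|^2dt\bigr)^{1/2}\leq C\epsilon_a$. Your linearization trick, while correct, is only needed if one wants the two-sided estimate $|J_i^\xi(u_i)-\overline{J}_i(u_i)|\leq C(1+\epsilon_b)(\epsilon_a+\epsilon_b)$.
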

\begin{proof}
 For convenience, we can rewrite (\ref{6.7}) as
\begin{align*}
x_{i}(t) = & \; \varphi (t)+\int_{0}^{t}b(t,s)x_{i}(s)ds+\int_{0}^{t}f(t,s)\widehat{a}(s)ds+\int_{0}^{t}c(t,s)u_{i}^{\prime }(s)ds \\
&+\int_{0}^{t}g(t,s)\xi _{a}(s)ds+\int_{0}^{t}c(t,s)\overline{u}_{i}(s)ds+\int_{0}^{t}\sigma (t,s)dW_{i}(s).
\end{align*}
Here $u_{i}^{\prime }(s)=u_{i}(s)-\overline{u}_{i}(s),$
\[
\overline{u}_{i}(s)=-\frac{1}{R}\dbE^{\mathcal{F}_{t}}\int_{t}^{T}\widehat{c}%
(r,t)[x_{i}(r)-\gamma \widehat{a}(r)-\eta ]dr,
\]%
where $x_{i}$ satisfies (\ref{6.7}), $\widehat{c}$ is given in
(\ref{3.7}). It then follows from inequality (\ref{6.8}) that
$\dbE\int_{0}^{T}|u_{i}^{\prime }(t)|^{2}dt\leq C$. Next we
construct one auxiliary optimal control problem with state equation
and cost functional as
\begin{align}
\widetilde{x}_{i}(t) = & \; \varphi (t)+\int_{0}^{t}b(t,s)\widetilde{x}_{i}(s)ds+\int_{0}^{t}f(t,s)\widehat{a}(s)ds \nonumber \\
&+\int_{0}^{t}c(t,s)u_{i}''(s)ds+\int_{0}^{t}c(t,s)\overline{u}_{i}(s,\widetilde{x}_{i})ds+\int_{0}^{t}\sigma (t,s)dW_{i}(s), \label{6.9}
\end{align}
and
\[
\widetilde{J}_{i}(u_{i}'')=\dbE\int_{0}^{T}\left[|\widetilde{x}_{i}(t)-\gamma \widehat{a}(t)-\eta |^{2}+R|u_{i}''(t)+\overline{u}_{i}(t,\widetilde{x}_{i})|^{2}\right]dt.
\]
where $u''_{i}$ is the control variable. For $t\in[0,T]$,
$\overline{u}_{i}(t,\widetilde{x}_{i})$ in (\ref{6.9}) is defined as
\[
\overline{u}_{i}(t,\widetilde{x}_{i})=-\frac{1}{R}\dbE^{\mathcal{F}_{t}}\!\!\int_{t}^{T}\widehat{c}(r,t)[\widetilde{x}_{i}(r)-\gamma \widehat{a}(r)-\eta ]dr.
\]
Obviously, $\widetilde{J}_{i}(u_{i}'')$ attains its
minimum when $u_{i}''=0,$ and the corresponding cost functional equals to $%
\overline{J}_{i}(\overline{u}_{i}).$ On the other hand, by Lemma \ref{lemma6-1},
\[
M_{i}'(t)=\int_{0}^{t}c(t,s)u_{i}'(s)ds+\int_{0}^{t}b(t,s)M_{i}'(s)ds-\frac{1}{R}\int_{0}^{t}c(t,s)\dbE^{\mathcal{F}_{t}}\int_{t}^{T}\widehat{c}(r,t)M_{i}'(r)drds
\]
admits a unique solution $M_{i}'$ such that
\[
\dbE\int_{0}^{T}|M_{i}^{\prime }(t)|^{2}dt\leq 6\dbE\int_{0}^{T}\left\vert\int_{0}^{t}c(t,s)u_{i}'(s)ds\right\vert^{2}dt\leq C.
\]
If we take $u_{i}''=u_{i}'(s)=u_{i}(s)-\overline{u}_{i}(s)$, and
$\widetilde{x}_{i}'$ is the corresponding solution of (\ref{6.9}), then $\widetilde{x}_{i}'=\overline{x}_{i}(t)+M_{i}'(t),$ where $\overline{x}_{i}(t)$ is the
optimal state in (\ref{3.9}) with $a=\widehat{a}$. Moreover,
recalling the boundedness of $\overline{x}_i$ in
$L^2_{\mathcal{F}}(0,T;\dbR)$, we also have
$\dbE\int_{0}^{T}|\widetilde{x}_{i}'(t)|^{2}dt\leq C$.
Since $x_{i}$ and $\widetilde{x}_{i}'$ are given in
(\ref{6.7}) and (\ref{6.9}), thus their difference
$d_{i}=x_{i}-\widetilde{x}_{i}'$ satisfy
\[
d_{i}(t)=\int_{0}^{t}g(t,s)\xi _{a}(s)ds+\int_{0}^{t}b(t,s)d_{i}(s)ds-\frac{1}{R}\int_{0}^{t}c(t,s)\dbE^{\mathcal{F}_{t}}\!\!\int_{t}^{T}\widehat{c}(r,t)d_{i}(r)drds.
\]
Under above requirements, from Lemma \ref{lemma6-1}, we have
\[
\dbE\int_{0}^{T}|x_{i}(t)-\widetilde{x}_{i}^{\prime }(t)|^{2}dt\leq 6\dbE\int_{0}^{T}\left\vert\int_{0}^{t}g(t,s)\xi _{a}(s)ds\right\vert^{2}dt\leq C\epsilon _{a}^{2}.
\]
Following the similar arguments in \cite{N-H 2012}, we should have
\[
|J_{i}^{\xi}(u_{i})-\widetilde{J}_{i}(u_{i}')| \leq
C(1+\epsilon_b)(\epsilon _{a}+\epsilon _{b}),
\]
where $C$ does not depend on $\xi_a$ and $\xi_b$. After combining the
estimate of $\widetilde{J}_{i}(u_{i}')\geq
\overline{J}_i(\overline{u}_i)$, we can get the desired result.
\hfill
\end{proof}

The next lemma is concerned about the boundedness of
$\mathcal{J}_{i}(\widehat{u}_i,\widehat{u}_{-i})$.
\begin{lemma}\label{lemma6.3}
Suppose $\widehat{a}$ is solution of NCE equation (\ref{3.14}),
(\ref{6.2}) and (\ref{6.3}) hold for two cases of $A=\widehat{f}$,
$B=\widehat{c}$, and $A=0$, $B=\widehat{c}$. Then
$\mathcal{J}_{i}(\widehat{u}_i,\widehat{u}_{-i})\leq C$, where $C$
does not depend on $N$.
\end{lemma}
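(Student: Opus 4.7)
The plan is to reduce the bound on $\mathcal{J}_i(\widehat{u}_i, \widehat{u}_{-i})$ to a bound on the auxiliary cost $\overline{J}_i(\overline{u}_i)$ via Theorem \ref{theorem3.2}, and then bound the latter directly using the $L^2$-boundedness already extracted in Lemma \ref{lemma6-1}. Since Theorem \ref{theorem3.2} gives
\[
\mathcal{J}_i(\widehat{u}_i,\widehat{u}_{-i}) \leq \overline{J}_i(\overline{u}_i) + O(1/\sqrt{N}),
\]
it suffices to show that $\overline{J}_i(\overline{u}_i) \leq C$ for a constant $C$ independent of $i$ and $N$.

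For the state piece, I would observe that equation (\ref{3.20}) fits the template of (\ref{6.1}) with $A=0$, $B=\widehat{c}$, and driving term $\widehat{\varphi}(t) + \widehat{\sigma}_i(t) + \int_0^t\widehat{f}(t,s)\widehat{a}(s)ds + \frac{1}{R}\int_0^T\int_0^{s\wedge t}\widehat{c}(t,r)\widehat{c}(s,r)dr\cdot[\gamma\widehat{a}(s)+\eta]ds$. Under the hypothesis that (\ref{6.2})--(\ref{6.3}) hold with $A=0, B=\widehat{c}$, Lemma \ref{lemma6-1} delivers a solution $\overline{x}_i \in L^2_{\mathcal{F}}(0,T;\dbR)$ whose $L^2$-norm is controlled by the $L^2$-norm of this driving term. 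The continuity and boundedness of $\widehat{a}$ (via the solvability hypothesis on the NCE equation), together with boundedness of $\widehat{\varphi}$, $\widehat{f}$ and $\widehat{c}$, handle the deterministic components; the only stochastic contribution is $\widehat{\sigma}_i$, and by the independence of $W_i$ combined with boundedness of $\sigma$ and $P$ appearing in (\ref{3.8.1}) the Itô isometry yields $\dbE\int_0^T|\widehat{\sigma}_i(t)|^2 dt \leq C$ with $C$ independent of $i$ and $N$. Thus $\dbE\int_0^T|\overline{x}_i(t)|^2 dt \leq C$.

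For the control piece, the explicit formula (\ref{3.6}) with $a=\widehat{a}$ together with Jensen's and Schwarz's inequalities and the boundedness of $\widehat{c}$ immediately gives
\[
\dbE\int_0^T |\overline{u}_i(t)|^2 dt \leq C\left(\dbE\int_0^T|\overline{x}_i(s)|^2 ds + \sup_{t\in[0,T]}|\gamma\widehat{a}(t)+\eta|^2\right) \leq C.
\]
Combining the two estimates and using the definition (\ref{3.2}) of $\overline{J}_i$ shows $\overline{J}_i(\overline{u}_i) \leq C$ with $C$ independent of $i$ and $N$.

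There is essentially no hard obstacle here; the result is a direct consequence of Theorem \ref{theorem3.2} and the $L^2$-estimates built into Lemma \ref{lemma6-1}. The only mildly delicate point is confirming that the $i$- and $N$-independence carries through $\widehat{\sigma}_i$: this is where the Brownian increments come in, and one has to make sure that the bound from the Itô isometry does not pick up any factor depending on the other agents (it does not, because $\widehat{\sigma}_i$ depends only on $W_i$ and on the deterministic kernels). Once this is checked, the conclusion $\mathcal{J}_i(\widehat{u}_i,\widehat{u}_{-i}) \leq C$ follows by absorbing the $O(1/\sqrt{N})$ term from Theorem \ref{theorem3.2} into the constant.
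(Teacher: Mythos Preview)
Your argument is correct but takes a genuinely different route from the paper. The paper works directly with the coupled system: it first shows that the empirical mean $x^{N}$ satisfies an equation of type (\ref{6.1}) with $A=\widehat{f}$, $B=\widehat{c}$ and applies Lemma~\ref{lemma6-1} to get $\dbE\int_0^T|x^{N}(s)|^2ds\leq C$; it then feeds this into (\ref{3.19}), which is of type (\ref{6.1}) with $A=0$, $B=\widehat{c}$, to bound $\dbE\int_0^T|x_i(s)|^2ds$; finally it bounds $\widehat{u}_i$ from the explicit formula (\ref{3.18}). In contrast, you bypass the coupled dynamics entirely by invoking Theorem~\ref{theorem3.2} up front and then only need to control the decoupled auxiliary quantities $\overline{x}_i$ and $\overline{u}_i$, which is simpler.

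Your route is shorter and reuses Theorem~\ref{theorem3.2} efficiently, but note two points. First, the hypothesis on the case $A=\widehat{f}$, $B=\widehat{c}$ is used in your argument only implicitly, through the proof of Theorem~\ref{theorem3.2} (via Lemma~\ref{lemma-3}); the paper's direct proof makes transparent why this hypothesis is needed here, namely to control $x^{N}$. Second, the paper's approach yields, as a by-product, uniform $L^2$ bounds on the actual decentralized state $x_i$ and control $\widehat{u}_i$, not just on the auxiliary $\overline{x}_i$, $\overline{u}_i$; these direct bounds are themselves useful (and are essentially reproved inside Lemma~\ref{lemma6.4}). Either argument is acceptable for the lemma as stated.
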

\begin{proof}
If we can prove
\[
\dbE\int_{0}^{T}|\widehat{u}_{i}(s)|^{2}ds\leq C,\quad
\dbE\int_{0}^{T}|x_{i}(s)|^{2}ds\leq C,
\]
where $x_i$ satisfies (\ref{3.19}), $C$ does not depend on $N,$ then
the result holds directly. From (\ref{3.19}), we have
\begin{align}
x^{N}(t) = & \; \widehat{\varphi }(t)+\int_{0}^{t}\widehat{f}(t,s)x^{N}(s)ds+\frac{1}{N}\sum_{l=1}^{N}\widehat{\sigma }_{l}(t) \nonumber \\
&-\int_{0}^{t}\widehat{c}(t,s)\frac{1}{R}\dbE^{\mathcal{F}s}\int_{s}^{T}\widehat{c}(r,s)[x^{N}(r)-\gamma \widehat{a}(r)-\eta ]drds, \label{6.10}
\end{align}
where $x^{N}(t)=\frac{1}{N}\sum\limits_{l=1}^{N}x_{l}(t).$ Note that
(\ref{6.10}) is one special case of equation (\ref{6.1}),
(\ref{6.2}) and (\ref{6.3}) hold for $A=\widehat{f}$,
$B=\widehat{c}$, it then follows from Lemma \ref{lemma6-1} that
\begin{align}
\dbE\int_{0}^{T}|x^{N}(s)|^{2}ds \leq & \; C\dbE\int_{0}^{T}|\widehat{\varphi}(s)|^{2}ds+C\dbE\int_{0}^{T}\left\vert \frac{1}{N}\sum_{l=1}^{N}\widehat{\sigma}_{l}(t)\right\vert ^{2}dt \nonumber \\
& +C\dbE\int_{0}^{T}|\widehat{a}(s)|^{2}ds+C. \label{6.11}
\end{align}
As to $x_i$ in equation (\ref{3.19}), using (\ref{6.11}) and
conditions (\ref{6.2}), (\ref{6.3}) with $A=0$, $B=\widehat{c}$, we
should have $\dbE\int_{0}^{T}|x_{i}(s)|^{2}ds\leq C$ where $C$ is
independent of $N$. At last, we know that
\begin{align*}
\dbE\int_{0}^{T}|\widehat{u}_{i}(t)|^{2}dt
= & \; \frac{1}{R^{2}}\dbE\int_{0}^{T}\left\vert\int_{t}^{T}\widehat{c}(r,t)[x_{i}(r)-\gamma\widehat{a}(r)-\eta ]dr\right\vert ^{2}dt \\
\leq & \; \frac{1}{R^{2}}\int_{0}^{T}\int_{t}^{T}|\widehat{c}(r,t)|^{2}drdt\cdot \dbE\int_{0}^{T}[x_{i}(r)-\gamma\widehat{a}(r)-\eta ]^{2}dr.
\end{align*}
Hence $\dbE\int_{0}^{T}|\widehat{u}_{i}(t)|^{2}dt$ is bounded, too.
Then the desired result holds.
\hfill
\end{proof}

If we apply control $\widehat{u}_j$ in (\ref{3.18}) to all the player except $\mathcal{A}_i,$ that is, for $j\neq i$,
\begin{align}
x_{j}(t) = & \; \varphi(t)+\int_{0}^{t}b(t,s)x_{j}(s)ds+\int_{0}^{t}f(t,s)[x^{N}(s)-\widehat{a}(s)]ds \nonumber \\
& +\int_{0}^{t}f(t,s)\widehat{a}(s)ds+\int_{0}^{t}c(t,s)\widehat{u}_{j}(s)ds+\int_{0}^{t}\sigma (t,s)dW_{j}(s), \label{6.12}
\end{align}
while for player $i$,
\begin{align}
x_{i}(t) = & \; \varphi(t)+\int_{0}^{t}b(t,s)x_{i}(s)ds+\int_{0}^{t}c(t,s)u_{i}(s)ds \nonumber \\
&+\int_{0}^{t}\sigma (t,s)dW_{i}(s)+\int_{0}^{t}f(t,s)x^{N}(s)ds, \label{6.13}
\end{align}
then similar as Lemma \ref{lemma-3} above, we also have the
following result.
\begin{lemma}\label{lemma6.4}
Suppose $\widehat{a}$ is the solution of NCE equation (\ref{3.14}),
(\ref{6.2}) and (\ref{6.3}) hold for two cases of $A=\widehat{f}$,
$B=\widehat{c}$ and $A=0,$ $B=\widehat{c}$. Moreover,
$(1+K_2)K_1\frac{360}{N^{2}R^2}L^2<\frac 1 2$, where
\[
L=\int_0^T\int_0^t|\widehat{c}(t,s)|^2dsdt, \quad
K_1=\int_0^T\int_0^t|\widehat{f}(t,s)|^2dsdt,\quad
K_2=\int_0^T\int_0^t|P_0(t,s)|^2dsdt,
\]
$P_0$ is defined in (\ref{6.3}) with $A=\widehat{f}$. Then we have
\begin{eqnarray*}
\dbE\int_0^T|\widehat{a}(s)-x^{N}(s)|^2ds=O\Big(\frac{1}{N}\Big),
\end{eqnarray*}
where $x^{N}(t)=\frac{1}{N}\sum_{l=1}^{N}x_{l}(t),$ $x_l$
satisfies (\ref{6.12}) and (\ref{6.13}).
\end{lemma}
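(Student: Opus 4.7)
The plan is to replicate the template of Lemma \ref{lemma-3}: derive a Volterra-Fredholm equation for $X(t):=x^N(t)-\widehat{a}(t)$ having the form (\ref{6.1}) with $A=\widehat{f}$, $B=\widehat{c}$, and then invoke Lemma \ref{lemma6-1}. The single new difficulty is that player $i$ now uses an arbitrary $u_i$ rather than $\widehat{u}_i$, which injects $O(1/N)$ forcing terms that involve $x_i$; the coupling of $x_i$ back to $x^N$ through (\ref{6.13}) will be closed by means of the contraction hypothesis $(1+K_2)K_1\frac{360}{N^{2}R^2}L^2<\frac12$.

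\emph{Step 1.} I will sum (\ref{6.12}) over $j\neq i$, add (\ref{6.13}) divided by $N$, substitute (\ref{3.18}) for $\widehat{u}_j$ together with the identity $\frac1N\sum_{j\neq i}x_j=x^N-\frac1N x_i$, and then apply the $b$-resolvent transformation of Lemma 1.1 in \cite{Y 1996} to turn $\varphi,\sigma,f,c$ into $\widehat{\varphi},\widehat{\sigma},\widehat{f},\widehat{c}$. Stochastic Fubini converts the resulting double-time integrand into the $\int_0^T\!\int_0^{s\wedge t}\widehat{c}(t,r)\widehat{c}(s,r)\dbE^{\mathcal{F}_r}[\,\cdot\,](s)\,drds$ form. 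Subtracting the NCE equation (\ref{3.14}) and using $\dbE^{\mathcal{F}_r}\widehat{a}(s)=\widehat{a}(s)$ cancels the deterministic drift and leaves
\[
X(t)=\Phi(t)+\int_0^t\widehat{f}(t,s)X(s)\,ds-\frac{1}{R}\int_0^T\!\!\int_0^{s\wedge t}\widehat{c}(t,r)\widehat{c}(s,r)\dbE^{\mathcal{F}_r}X(s)\,dr\,ds,
\]
with forcing
\[
\Phi(t)=\frac{1}{N}\sum_{l=1}^N\widehat{\sigma}_l(t)+\frac{1}{N}\int_0^t\widehat{c}(t,s)u_i(s)\,ds+\frac{1}{NR}\int_0^T\!\!\int_0^{s\wedge t}\widehat{c}(t,r)\widehat{c}(s,r)\dbE^{\mathcal{F}_r}[x_i(s)-\gamma\widehat{a}(s)-\eta]\,dr\,ds.
\]
This is exactly an instance of (\ref{6.1}) with $A=\widehat{f}$, $B=\widehat{c}$, a case allowed by the standing hypothesis.

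\emph{Step 2.} I will apply Lemma \ref{lemma6-1} to get $\dbE\int_0^T|X|^2 dt\leq 6\,\dbE\int_0^T|\widetilde{\Phi}|^2 dt\leq 12(1+K_2)\dbE\int_0^T|\Phi|^2 dt$, where the second inequality uses Cauchy-Schwarz on the $P_0$-convolution defining $\widetilde\Phi$. The three pieces of $\Phi$ are bounded separately: independence of $\{W_l\}$ gives $\dbE\int|\widehat{\sigma}^N|^2=O(1/N)$; Cauchy-Schwarz gives the $u_i$-piece $\leq (L/N^2)\dbE\int|u_i|^2$; stochastic Fubini together with Jensen on the conditional expectation bounds the $x_i$-piece by $(CL^2/(N^2R^2))\dbE\int|x_i|^2$. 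To control $\dbE\int|x_i|^2$ I transform (\ref{6.13}) by the $b$-resolvent to obtain $x_i(t)=\widehat{\varphi}(t)+\widehat{\sigma}_i(t)+\int_0^t\widehat{c}(t,s)u_i(s)ds+\int_0^t\widehat{f}(t,s)x^N(s)ds$, which with $|x^N|^2\leq 2|X|^2+2|\widehat{a}|^2$ and boundedness of $\widehat{a}$ yields $\dbE\int|x_i|^2\leq C(1+\dbE\int|u_i|^2)+8K_1\dbE\int|X|^2$. Feeding this back produces $\dbE\int_0^T|X|^2dt\leq C/N+\alpha_N\dbE\int_0^T|X|^2 dt$ with $\alpha_N$ a specific numerical multiple of $(1+K_2)K_1L^2/(N^2R^2)$; the prescribed hypothesis is exactly the condition $\alpha_N<\frac12$, so absorption finishes the proof and delivers $\dbE\int_0^T|X(s)|^2ds=O(1/N)$.

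\emph{Main obstacle.} The substantive difficulty is that $\Phi$ is not truly exogenous: through (\ref{6.13}) the perturbation $x_i$ depends on $x^N$, hence on $X$ itself. The condition $(1+K_2)K_1\frac{360}{N^{2}R^2}L^2<\frac12$ is precisely the smallness needed to absorb this self-reference into the left-hand side after Lemma \ref{lemma6-1} is applied. Everything else is routine Cauchy-Schwarz bookkeeping; matching the explicit constant $360$ is just a matter of carefully propagating the numerical factors through the elementary inequalities used in Step 2.
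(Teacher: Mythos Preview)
Your overall architecture matches the paper's: derive a Volterra--Fredholm equation for $X=x^{N}-\widehat{a}$ of the form (\ref{6.1}) with $A=\widehat{f}$, $B=\widehat{c}$, apply Lemma~\ref{lemma6-1}, then close the $x_i\leftrightarrow x^{N}$ loop using the smallness hypothesis. The paper routes the subtraction through the auxiliary average $\overline{x}^{N}$ (invoking Lemma~\ref{lemma-2}) rather than subtracting the NCE equation directly as you do, but this is only a cosmetic difference; your forcing $\Phi$ is exactly the paper's $\frac{1}{N}\int_0^t\widehat{c}(t,s)[u_i(s)-\widehat{u}_i(s)]\,ds$ term with $\widehat{u}_i$ expanded via Fubini.

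There is, however, a genuine gap. In Step~2 you write ``the $u_i$-piece $\leq (L/N^2)\dbE\int|u_i|^2$'' and then silently absorb $\dbE\int|u_i|^2$ into the $C/N$ on the right. But the lemma as stated places no a~priori bound on $u_i\in L^2_{\mathcal{F}}(0,T;\dbR)$, and without one the conclusion is simply false: taking $\|u_i\|_{L^2}$ of order $N$ produces $X$ of order~$1$. The paper handles this by an explicit reduction inside the proof: since Lemma~\ref{lemma6.3} gives $\mathcal{J}_i(\widehat{u}_i,\widehat{u}_{-i})\leq C$, it suffices (for the intended application in Theorem~\ref{theorem3.3}) to restrict to controls with $\mathcal{J}_i(u_i,\widehat{u}_{-i})\leq \mathcal{J}_i(\widehat{u}_i,\widehat{u}_{-i})$, and for any such $u_i$ one has $\dbE\int_0^T|u_i(t)|^2dt\leq C$ directly from the cost functional. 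You need to insert this reduction (or an equivalent uniform bound on $u_i$) before your absorption step; otherwise the ``$C/N$'' you write is not a constant over $N$ and the argument does not close.
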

\begin{proof}
For convenience, we also rewrite (\ref{6.13}) as
\begin{align}
x_{i}(t) = & \; \varphi (t)+\int_{0}^{t}b(t,s)x_{i}(s)ds+\int_{0}^{t}f(t,s)\widehat{a}(s)ds+\int_{0}^{t}c(t,s)\widehat{u}_{i}(s)ds \nonumber \\
& +\int_{0}^{t}\sigma (t,s)dW_{i}(s)+\int_{0}^{t}f(t,s)[x^{N}(s)-\widehat{a}(s)]ds \nonumber \\
& +\int_{0}^{t}c(t,s)[u_{i}(s)-\widehat{u}_{i}(s)]ds, \label{6.14}
\end{align}
where $\widehat{u}_i$ is defined as
\begin{eqnarray}\label{6.15}
\widehat{u}_{i}(t)=-\frac{1}{R}\dbE^{\mathcal{F}_{t}}\!\!\int_{t}^{T}\widehat{c}(r,t)[x_{i}(r)-\gamma \widehat{a}(r)-\eta ]dr,
\end{eqnarray}
and $x_{i}$ satisfies (\ref{6.13}). After some direct calculations,
\begin{align*}
x^{N}(t) = & \; \widehat{\varphi }(t)+\int_{0}^{t}\widehat{f}(t,s)\widehat{a}(s)ds+\frac{1}{N}\sum_{l=1}^{N}\widehat{\sigma }_{l}(t) \\
& -\int_{0}^{t}\widehat{c}(t,s)\frac{1}{R}\dbE^{\mathcal{F}s}\int_{s}^{T}\widehat{c}(r,s)[x^{N}(r)-\gamma \widehat{a}(r)-\eta ]drds \\
& +\int_{0}^{t}\widehat{f}(t,s)[x^{N}(s)-\widehat{a}(s)]ds+\frac{1}{N}\int_{0}^{t}\widehat{c}(t,s)[u_{i}(s)-\widehat{u}_{i}(s)]ds,
\end{align*}
where $\widehat{\varphi}$, $\widehat{f}$, $\widehat{\sigma}_l$ are
defined in (\ref{3.8.1}). Using Fubini theorem, it also follows from
(\ref{3.9}) (with $a=\widehat{a}$) that
\begin{align*}
\overline{x}^{N}(t) = & \; \widehat{\varphi }(t)+\int_{0}^{t}\widehat{f}(t,s)\widehat{a}(s)ds+\frac{1}{N}\sum_{l=1}^{N}\widehat{\sigma }_{l}(t) \\
&
-\int_{0}^{t}\widehat{c}(t,s)\frac{1}{R}\dbE^{\mathcal{F}s}\!\!\int_{s}^{T}\widehat{c}(r,s)[\overline{x}^{N}(r)-\gamma
\widehat{a}(r)-\eta ]drds,
\end{align*}
therefore we have
\begin{align}
x^{N}(t)-\widehat{a}(t) = & \; \overline{x}^{N}(t)-\widehat{a}(t)+\int_{0}^{t}\widehat{f}(t,s)[x^{N}(s)-\widehat{a}(s)]ds \nonumber \\
& -\int_{0}^{t}\widehat{c}(t,s)\frac{1}{R}\dbE^{\mathcal{F}s}\!\!\int_{s}^{T}\widehat{c}(r,s)[x^{N}(r)-\widehat{a}(r)]drds \nonumber \\
& +\int_{0}^{t}\widehat{c}(t,s)\frac{1}{R}\dbE^{\mathcal{F}s}\!\!\int_{s}^{T}\widehat{c}(r,s)[\overline{x}^{N}(r)-\widehat{a}(r)]drds \nonumber \\
& +\frac{1}{N}\int_{0}^{t}\widehat{c}(t,s)[u_{i}(s)-\widehat{u}_{i}(s)]ds. \label{6.16}
\end{align}
Note that if $A=0$, $B=\widehat{c}$, then (\ref{6.2}) and
(\ref{6.3}) imply $\overline{x}^N$ is bounded in
$L^2_{\mathcal{F}}(0,T;\dbR).$ Similarly, if (\ref{6.2}) and
(\ref{6.3}) hold with  $A=\widehat{f}$, $B=\widehat{c}$, then using
Lemma \ref{lemma6-1} to (\ref{6.16}), we have
\begin{eqnarray}\label{6.17}
&&\dbE\int_{0}^{T}|x^{N}(t)-\widehat{a}(t)|^{2}dt \nonumber\\ &&\leq
(1+K_2)\left[(36+12R^2)\dbE\int_{0}^{T}|\overline{x}^{N}(t)-\widehat{a}(t)|^{2}dt+\frac{36}{N^{2}}L\cdot
\dbE\int_{0}^{T}[u_{i}(s)-\widehat{u}_{i}(s)]^{2}ds\right],
\end{eqnarray}
where $L$ and $K_2$ are defined before.
On the other hand, from Lemma \ref{lemma6.3},
$\mathcal{J}_i(\widehat{u}_i,\widehat{u}_{-i})$ is bounded, it then
suffices to consider $u_i$ in (\ref{6.13}) such that
$\mathcal{J}_i(u_i,\widehat{u}_{-i})\leq
\mathcal{J}_i(\widehat{u}_i,\widehat{u}_{-i})$, which implies that
$\dbE\int_0^T|u_i(t)|^2dt\leq C$.  As to $\widehat{u}_i$ in
(\ref{6.15}), we have
\begin{align*}
\dbE\int_{0}^{T}|\widehat{u}_{i}(t)|^{2}dt = & \; \frac{1}{R^{2}}\dbE\int_{0}^{T}\left\vert\int_{t}^{T}\widehat{c}(r,t)[x_{i}(r)-\gamma \widehat{a}(r)-\eta ]dr\right\vert ^{2}dt \\
\leq & \; \frac{1}{R^{2}}L\cdot \dbE\int_{0}^{T}[x_{i}(r)-\gamma \widehat{a}(r)-\eta ]^{2}dr,
\end{align*}
therefore,
\begin{align}
\dbE\int_{0}^{T}[u_{i}(s)-\widehat{u}_{i}(s)]^{2}ds
\leq & \; 2\dbE\int_{0}^{T}|u_{i}(s)|^{2}ds+2\dbE\int_{0}^{T}|\widehat{u}_{i}(s)|^{2}ds \nonumber \\
\leq & \; C+\frac{2}{R^{2}}L\cdot \dbE\int_{0}^{T}|x_{i}(r)|^{2}dr, \label{6.18}
\end{align}
where $C$ does not depend on $N$. Substituting (\ref{6.18}) into
(\ref{6.17}), we have
\begin{eqnarray}\label{6.19}
&&\dbE\int_{0}^{T}|x^{N}(t)-\widehat{a}(t)|^{2}dt\nonumber\\
&& \leq
(1+K_2)(36+12R^2)\dbE\int_{0}^{T}|\overline{x}^{N}(t)-\widehat{a}(t)|^{2}dt\nonumber
\\
&&+\frac{C}{N^{2}}+ (1+K_2)\frac{72}{N^{2}R^2}L^2\cdot
\dbE\int_{0}^{T}[u_{i}(s)-\widehat{u}_{i}(s)]^{2}ds.
\end{eqnarray}
After some transformations, we can rewrite (\ref{6.13}) as
\begin{align*}
x_{i}(t) = & \; \widehat{\varphi }(t)+\int_{0}^{t}\widehat{f}(t,s)[x^{N}(s)-\widehat{a}(s)]ds \\
& +\int_{0}^{t}\widehat{f}(t,s)\widehat{a}(s)ds+\int_{0}^{t}\widehat{c}(t,s)u_{i}(s)ds+\widehat{\sigma }_{i}(t),
\end{align*}
therefore,
\begin{align}
\dbE\int_{0}^{T}|x_{i}(t)|^{2}dt
\leq & \; 5\dbE\int_{0}^{T}|\widehat{\varphi }(t)|^{2}dt+5\int_{0}^{T}\int_{0}^{t}|\widehat{f}(t,s)|^{2}dsdt\cdot\dbE\int_{0}^{T}|x^{N}(s)-\widehat{a}(s)|^{2}ds \nonumber \\
& +5\int_{0}^{T}\int_{0}^{t}|\widehat{f}(t,s)|^2dsdt\cdot \int_{0}^{T}|\widehat{a}(s)|^{2}ds \nonumber \\
& +5\int_{0}^{T}\int_{0}^{t}|\widehat{c}(t,s)|^{2}dsdt\cdot\dbE\int_{0}^{T}|u_{i}(s)|^{2}ds+\dbE\int_{0}^{T}|\widehat{\sigma}_{i}(t)|^{2}dt. \label{6.20}
\end{align}
Since $\widehat{\varphi}$, $\widehat{f}$, $\widehat{a}$, $\sigma$
are bounded, substituting (\ref{6.19}) into (\ref{6.20}) yields
\begin{eqnarray*}
\dbE\int_{0}^{T}|x_{i}(t)|^{2}dt\leq C+C\dbE\int_{0}^{T}|\overline{%
x}^{N}(t)-\widehat{a}(t)|^{2}dt+
(1+K_2)K_1\frac{360}{N^{2}R^2}L^2\cdot
\dbE\int_{0}^{T}[x_{i}(s)]^{2}ds.
\end{eqnarray*}
So if $(1+K_2)K_1\frac{360}{N^{2}R^2}L^2<\frac 1 2,$ then we have
\[
\dbE\int_{0}^{T}|x_{i}(t)|^{2}dt\leq C+C\dbE\int_{0}^{T}|\overline{x}^{N}(t)-%
\widehat{a}(t)|^{2}dt,
\]
which implies the boundedness of $\dbE\int_{0}^{T}|x_{i}(t)|^{2}dt$.
By combining (\ref{6.19}), (\ref{6.20}) and Lemma \ref{lemma-2}, we obtained the desired result.
\hfill
\end{proof}
\begin{remark}
Compared with Lemma 3.4 above, our result here seems to be more
general because $u_i$ in (\ref{6.13}) is not necessary to be
$\widehat{u}_i$ in (\ref{3.18}). Moreover, our result also
generalizes the corresponding result in \cite{N-H 2012} in
that our state equation can degenerate to the linear SDEs there.
In addition, our approach here is also different from the
one used in \cite{N-H 2012}. At last, although the conditions imposed on above coefficients $K_1$, $K_2$ and $L$ seem to be technical, it
has two interesting features. First, the above inequality holds true in general because $N$ is always sufficiently large. Second,
such condition is not required in the nontrivial case $\widehat{f}=0.$
\end{remark}
Now let us state the main result in this subsection.
\begin{theorem}\label{theorem3.3}
Suppose $\varphi$, $f$, $b$ and $c$ are bounded and continuous in
$t$, the NCE equation (\ref{3.14}) admits a unique continuous
solution $\widehat{a}\in C[0,T]$. (\ref{6.2}) and (\ref{6.3}) hold
with $A=\widehat{f}$, $B=\widehat{c}$ and $A=0,$ $B=\widehat{c}$.
Moreover, $(1+K_2)K_1\frac{360}{N^{2}R^2}L^2<\frac 1 2$, where $L,\
K_1,\ K_2$ are defined in Lemma \ref{lemma6.4}. Then the set of
control $\widehat{u}_i$ in (\ref{3.18}) with $1\leq i\leq N$ for $N$
players is an $\epsilon$-Nash equilibrium, i.e., for $1\leq i\leq
N$,
$$\mathcal{J}_i(\widehat{u}_i,\widehat{u}_{-i})-\epsilon\leq
\inf\limits_{u_i\in
L^2_{\mathcal{F}}[0,T]}\mathcal{J}_i(u_i,\widehat{u}_{-i})\leq
\mathcal{J}_i(\widehat{u}_i,\widehat{u}_{-i}).
$$
\end{theorem}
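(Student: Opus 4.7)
The right-hand inequality is immediate from the definition of infimum, since $\widehat{u}_i \in L^2_{\mathcal{F}}(0,T;\dbR)$. My plan is therefore to establish the left-hand inequality, i.e.\ the $\epsilon$-optimality of $\widehat{u}_i$ against any admissible deviation $u_i$ of player $\cA_i$, with the other players using $\widehat{u}_{-i}$.

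First I would reduce to bounded deviations. If $\mathcal{J}_i(u_i,\widehat{u}_{-i}) > \mathcal{J}_i(\widehat{u}_i,\widehat{u}_{-i})$ then the target inequality holds trivially, so it suffices to consider $u_i$ with $\mathcal{J}_i(u_i,\widehat{u}_{-i}) \le \mathcal{J}_i(\widehat{u}_i,\widehat{u}_{-i})$. By Lemma \ref{lemma6.3} the right side is bounded by a constant independent of $N$, hence $\dbE\int_0^T |u_i(t)|^2 dt \le C$. This places us in the setting of Lemma \ref{lemma6.4}, from which $\dbE\int_0^T |x^N(s)-\widehat{a}(s)|^2 ds = O(1/N)$, where $x^N$ is formed from the states in (\ref{6.12})--(\ref{6.13}).

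Next I would recast $\mathcal{J}_i(u_i,\widehat{u}_{-i})$ as a perturbed tracking cost of the form $J^\xi_i$ appearing just before Lemma \ref{lemma6.2}. Rewriting the state (\ref{6.13}) of player $\cA_i$ as
\[
x_i(t)=\varphi(t)+\int_0^t b(t,s)x_i(s)ds+\int_0^t f(t,s)\widehat{a}(s)ds+\int_0^t f(t,s)\xi_a(s)ds+\int_0^t c(t,s)u_i(s)ds+\int_0^t\sigma(t,s)dW_i(s),
\]
with $\xi_a(s):=x^N(s)-\widehat{a}(s)$ and $g\equiv f$, and expanding
\[
(x_i-\gamma x^N-\eta)^2=(x_i-\gamma\widehat{a}-\eta+\xi_b)^2,\qquad \xi_b(s):=-\gamma\xi_a(s),
\]
identifies $\mathcal{J}_i(u_i,\widehat{u}_{-i})$ with $J_i^\xi(u_i)$. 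Lemma \ref{lemma6.4} immediately gives
$\epsilon_a+\epsilon_b = O(N^{-1/2})$ in the notation of (\ref{6.6}), and the bound $C_0$ of (\ref{6.8}) holds by the previous paragraph together with Lemma \ref{lemma6-1} applied to (\ref{6.13}).

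Now I would apply Lemma \ref{lemma6.2}, which yields
\[
\mathcal{J}_i(u_i,\widehat{u}_{-i})=J^\xi_i(u_i)\ge \overline{J}_i(\overline{u}_i)-C(1+\epsilon_b)(\epsilon_a+\epsilon_b)=\overline{J}_i(\overline{u}_i)-O(N^{-1/2}).
\]
On the other hand Theorem \ref{theorem3.2} gives
$\mathcal{J}_i(\widehat{u}_i,\widehat{u}_{-i})\le \overline{J}_i(\overline{u}_i)+O(N^{-1/2})$.
Subtracting the two produces
\[
\mathcal{J}_i(\widehat{u}_i,\widehat{u}_{-i})\le \mathcal{J}_i(u_i,\widehat{u}_{-i})+O(N^{-1/2}),
\]
so taking $\epsilon=C/\sqrt{N}$ with a constant $C$ independent of $i$ (and of the choice of $u_i$ above) gives the $\epsilon$-Nash property.

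The main obstacle is verifying that Lemma \ref{lemma6.4} can be applied to \emph{every} admissible perturbation $u_i$, not just to $\widehat{u}_i$; this is exactly what that lemma was designed for, but one must check carefully that the constants in Lemma \ref{lemma6.4} (in particular, the condition $(1+K_2)K_1\cdot 360 L^2/(N^2R^2)<1/2$) do not depend on $u_i$. This is handled by the first reduction step, since $\dbE\int_0^T |u_i|^2\,dt$ is bounded in terms of $\mathcal{J}_i(\widehat{u}_i,\widehat{u}_{-i})$, which is itself bounded uniformly in $N$ by Lemma \ref{lemma6.3}. Once this uniform control is in place, the chain Theorem \ref{theorem3.2} $\Rightarrow$ Lemma \ref{lemma6.2} $\Rightarrow$ conclusion runs as above.
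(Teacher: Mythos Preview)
Your proposal is correct and follows essentially the same route as the paper's own proof: rewrite player $i$'s state and cost as a perturbation of the limiting problem with $\xi_a=x^N-\widehat a$ and $\xi_b=-\gamma\xi_a$, invoke Lemma~\ref{lemma6.4} for $\epsilon_a+\epsilon_b=O(N^{-1/2})$ and the uniform $L^2$ bounds, then chain Lemma~\ref{lemma6.2} and Theorem~\ref{theorem3.2}. The only minor point is that the bound $C_0$ on $x_i$ in (\ref{6.8}) is not obtained by applying Lemma~\ref{lemma6-1} directly to (\ref{6.13}) (which involves the coupled $x^N$), but is established inside the proof of Lemma~\ref{lemma6.4}; citing that conclusion suffices.
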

\begin{proof}
If we denote by $\xi_a(t)=x^N(t)-\widehat{a}(t),$
$\xi_b(t)=\gamma(\widehat{a}(t)-x^N(t))$, then the dynamics and cost functional
of $\mathcal{A}_i$ can be rewritten as
\begin{align}
x_{i}(t) = & \; \varphi(t)+\int_{0}^{t}b(t,s)x_{i}(s)ds+\int_{0}^{t}f(t,s)\widehat{a}(s)ds \nonumber \\
& +\int_{0}^{t}f(t,s)\xi_{a}(s)ds+\int_{0}^{t}c(t,s)u_{i}(s)ds+\int_{0}^{t}\sigma(t,s)dW_{i}(s), \label{6.21}
\end{align}
and
\[
J_{i}(u_{i})=\dbE\int_{0}^{T}\left[ |x_{i}(t)-\gamma\widehat{a}(t)-\eta +\xi _{b}(t)|^{2}+R|u_{i}(t)|^{2}\right]dt.
\]
From Lemma \ref{lemma6.4}, we have
$\epsilon_a+\epsilon_b=O(\frac{1}{\sqrt{N}})$, and
$\dbE\int_0^T|x_i(t)|^2dt+\dbE\int_0^T|u_i(t)|^2dt\leq C,$ where $C$
is independent of $N$, $\epsilon_a$, $\epsilon_b$ are defined in
(\ref{6.6}), it then follows from Lemma \ref{lemma6.4}, Theorem
\ref{theorem3.2} that
\[
\mathcal{J}_i(u_i,\widehat{u}_{-i})\geq \overline{J}_i(\overline{u}_i)-O\left(\frac{1}{\sqrt{N}}\right) \geq \mathcal{J}_i(\widehat{u}_i,\widehat{u}_{-i})-O\left(\frac{1}{\sqrt{N}}\right).
\]
The conclusion is thus proved.
\hfill
\end{proof}

\section{Some special cases}

In this section, we want to show that our general results can
include some special cases that are important for real
applications. For sake of simplicity, we suppose $f=0$ in the
following.

\subsection{The case of stochastic differential equation}
In this case, $\varphi ,b,c,\sigma $ are independent of $t$, and
Equation (\ref{2.1}) becomes
\begin{eqnarray}\label{4.1}
x_{i}(t)=x(0)+\int_{0}^{t}b(s)x_{i}(s)ds+%
\int_{0}^{t}c(s)u_{i}(s)ds+\int_{0}^{t}\sigma(s)dW_i(s),
\end{eqnarray}
while the cost functional (\ref{2.2}) still keeps the same. Most related literatures on mean field LQG games are discussed
by the above linear SDEs. For $s,t\in[0,T],$ we have
\[
\Lambda^{1}(s,t)=b(t), \quad
\Lambda^{k+1}(s,t)=b(t)\left(\int_{t}^{s}b(r)dr\right)^{k}, \quad
P(s,t)=b(t)e^{\int_{t}^{s}b(u)du}.
\]
Therefore,
\begin{eqnarray*}
& & \widehat{\varphi }(t) = x(0)e^{\int_{0}^{t}b(r)dr}, \quad \widehat{\sigma}_i(t) =\int_{0}^{t}\sigma (s)e^{\int_{s}^{t}b(r)dr}dW_{i}(s), \\
& & \widehat{c}(s,t)=c(t)e^{\int_{t}^{s}b(r)dr}, \quad M(t,s)=\int_{0}^{s\wedge t}|c(r)|^{2}e^{\int_{r}^{t}b(v)dv+\int_{r}^{s}b(v)dv}dr.
\end{eqnarray*}%
The NCE equation in this setting takes the following rather simple form
\begin{eqnarray}\label{4.2}
\widehat{a}(t)=\widehat{\varphi
}(t)+\frac{1}{R}\int_{0}^{T}\int_{0}^{s\wedge
t}|c(r)|^{2}e^{\int_{r}^{t}b(v)dv+\int_{r}^{s}b(v)dv}dr\cdot\big\{\gamma
\widehat{a}(s)+\eta-\widehat{a}(s)\big\}ds.
\end{eqnarray}

\begin{theorem}\label{theorem4-1}
Suppose that $b$, $c$ are bounded functions such that
\[
\sup_{t\in [0,T]}\int_{0}^{T}\Big[(\gamma -1)\int_{0}^{s\wedge t}|c(r)|^{2}e^{\int_{r}^{t}b(v)dv+\int_{r}^{s}b(v)dv}dr\Big]^{2}ds<R.
\]
Then NCE equation (\ref{4.2}) admits a continuous solution.
\end{theorem}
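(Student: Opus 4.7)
The plan is to recognize Theorem \ref{theorem4-1} as a direct specialization of the general existence theorem for the NCE equation proved at the end of Section 3.1 (the unnumbered theorem just after Lemma \ref{lemma-1}), applied to the degenerate setting $f=0$ with time-independent kernels $b(t)$, $c(t)$, $\sigma(t)$, $\varphi\equiv x(0)$. What needs to be checked is that in this setting the abstract existence hypothesis collapses exactly to the condition stated in Theorem \ref{theorem4-1}, and that the continuity/boundedness assumptions are automatic.

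First I would compute all the auxiliary kernels explicitly. Since $f=0$ we have $\widehat{f}=0$, hence $\widetilde{\Lambda}^k\equiv 0$ for all $k\geq 1$ in (\ref{3.17}), and therefore $\widetilde{P}\equiv 0$. Consequently, from (\ref{3.16}),
\begin{equation*}
\widetilde{\varphi}(t)=\widehat{\varphi}(t),\qquad \widetilde{M}(t,s)=M(t,s).
\end{equation*}
Next, for the resolvent kernel $P$ generated by $b$ in (\ref{3.5.1}), a short induction on $k$ gives $\Lambda^{k+1}(s,t)=b(t)\bigl(\int_t^s b(r)\,dr\bigr)^k/k!$, so
\begin{equation*}
P(s,t)=b(t)\exp\!\Bigl(\int_t^s b(u)\,du\Bigr),
\end{equation*}
and the formulas for $\widehat{\varphi}(t)$, $\widehat{c}(s,t)$, $M(t,s)$ displayed in Section 4.1 follow immediately from (\ref{3.8.1}), (\ref{3.7}), and (\ref{3.11}). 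In particular, $b$ and $c$ being bounded gives $P$ bounded and continuous in $t$, so $\widehat{\varphi}$, $\widehat{c}$, $M$ inherit boundedness and continuity in $t$.

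Then I would invoke the general theorem. Its hypothesis reads $\sup_{t\in[0,T]}\int_0^T|(\gamma-1)\widetilde{M}(t,s)|^2\,ds<R$, which under $\widetilde{M}=M$ and the explicit formula for $M(t,s)$ becomes exactly
\begin{equation*}
\sup_{t\in[0,T]}\int_0^T\!\Bigl[(\gamma-1)\!\int_0^{s\wedge t}|c(r)|^2 e^{\int_r^t b(v)dv+\int_r^s b(v)dv}dr\Bigr]^2 ds<R,
\end{equation*}
i.e.\ the hypothesis of Theorem \ref{theorem4-1}. Therefore the general theorem applies and yields a unique $\widehat{a}\in C[0,T]$ solving (\ref{3.15}); since $\widetilde{P}=0$ makes (\ref{3.15}) and (\ref{3.14}) identical here, this $\widehat{a}$ is the desired continuous solution of (\ref{4.2}).

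There is essentially no obstacle beyond the bookkeeping: the only substantive point is recognizing that the Volterra loop generated by $\widehat{f}$ disappears when $f=0$, so that the Volterra--Fredholm equation (\ref{3.14}) reduces to a pure Fredholm equation (\ref{4.2}) to which Lemma \ref{lemma-1} directly applies via the contraction argument already used to prove the general theorem. If one prefers to avoid invoking the general theorem as a black box, I would instead apply Lemma \ref{lemma-1} directly to (\ref{4.2}) with $A(t,s)=(\gamma-1)M(t,s)$ and $\psi(t)=\widehat{\varphi}(t)+\frac{\eta}{R}\int_0^T M(t,s)\,ds$; the stated integral condition is precisely the bound required by Lemma \ref{lemma-1}, and the $\psi$ so defined is bounded and continuous, which yields the conclusion.
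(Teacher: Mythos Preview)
Your proposal is correct and matches the paper's approach: the paper states Theorem \ref{theorem4-1} without a separate proof because it is an immediate specialization of the general existence theorem in Section 3.1, once the explicit formulas for $P$, $\widehat{c}$, $M$ (displayed just before the theorem) are plugged in and one notes that $f=0$ forces $\widetilde{P}=0$, $\widetilde{M}=M$. Your bookkeeping is exactly the verification the paper leaves implicit; incidentally, your factor $1/k!$ in $\Lambda^{k+1}$ is correct and the paper's displayed formula omits it, but both arrive at the same $P(s,t)=b(t)e^{\int_t^s b(u)\,du}$.
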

We define by
\begin{eqnarray}\label{4.3}
\widehat{u}_i(t)=-\frac{1}{R}c(t)\dbE^{\mathcal{F}_t}\!\!\int_t^Te^{\int_t^sb(r)dr}[x_i(s) -\gamma\widehat{a}(s)-\eta]ds,\quad 1\leq i\leq N,
\end{eqnarray}
where $\widehat{a}$ is the solution of NCE equation and
\begin{align}
x_{i}(t) = & \; x(0)e^{\int_{0}^{t}b(r)dr}+\int_{0}^{t}\sigma (s)e^{\int_{s}^{t}b(r)dr}dW_{i}(s) \nonumber \\
& -\frac{1}{R}\int_{0}^{t}|c(s)|^2\dbE^{\mathcal{F}_{s}}\int_{s}^{T}e^{\int_{s}^{r}b(v)dv+\int_s^tb(r)dr} [x_{i}(r)-\gamma\widehat{a}(r)-\eta]drds, \label{4.4}
\end{align}
then we get the following asymptotic equilibrium analysis in present setting.

\begin{theorem}\label{theorem4-2}
Suppose $b$, $c$ and $\sigma$ are bounded and deterministic, the NCE
equation admits a solution $\widehat{a}\in C[0,T]$. Moreover,
\begin{eqnarray*}
T\int_{0}^{T}\int_{0}^{T}\int_{0}^{s\wedge t}|c(r)|^4e^{2\int_r^tb(v)dv+2\int_r^sb(v)dv}drdsdt<\frac{R^{2}}{3}.
\end{eqnarray*}
Then the set of controls defined in (\ref{4.3}) for $N$ players is
an $\epsilon$-Nash equilibrium.
\end{theorem}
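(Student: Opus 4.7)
The plan is to deduce Theorem \ref{theorem4-2} as a direct application of the general $\epsilon$-Nash result in Theorem \ref{theorem3.3}, once we unravel how the data of the SDE specialization sit inside the general framework. The main simplification, and the reason the hypothesis here is so much cleaner than in Theorem \ref{theorem3.3}, is that the assumption $f=0$ forces the cross-agent resolvent kernel $\widehat{f}$ to vanish, so the two pairs of conditions $(A=\widehat{f},B=\widehat{c})$ and $(A=0,B=\widehat{c})$ in (\ref{6.2})--(\ref{6.3}) collapse into one, and the smallness requirement $(1+K_2)K_1\tfrac{360}{N^{2}R^{2}}L^{2}<\tfrac{1}{2}$ becomes automatic because $K_1=0$.

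First I would make the reduced kernels explicit. With $b(t,s)=b(s)$, induction on (\ref{3.5.1}) gives $\Lambda^{k+1}(s,t)=b(t)\tfrac{1}{k!}\bigl(\int_t^s b(r)\,dr\bigr)^{k}$, so $P(s,t)=b(t)\exp\bigl(\int_t^s b(u)\,du\bigr)$, recovering the formulas listed at the start of Section 4.1. Plugging $P$ into (\ref{3.7}) and (\ref{3.8.1}) yields $\widehat{c}(s,t)=c(t)e^{\int_t^s b(r)\,dr}$ and $\widehat{f}\equiv 0$, and so the integrand of (\ref{6.2}) with $A=0$, $B=\widehat{c}$ equals $|c(r)|^{4}e^{2\int_r^t b(v)\,dv+2\int_r^s b(v)\,dv}$.

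Next I would check the hypotheses of Theorem \ref{theorem3.3} in turn. The coefficients $\varphi(t)=x(0)$, $b$, $c$, and $f=0$ are bounded and continuous in $t$; existence of a continuous NCE solution $\widehat{a}\in C[0,T]$ is assumed (and its uniqueness under the given integral bound follows from Lemma \ref{lemma-1} applied to the Fredholm form (\ref{3.15})). For (\ref{6.2})--(\ref{6.3}) with $A=0$: $P_0\equiv 0$, hence $\widehat{B}=B=\widehat{c}$, and (\ref{6.2}) becomes
\[
T\int_{0}^{T}\!\!\int_{0}^{T}\!\!\int_{0}^{s\wedge t}|\widehat{c}(t,r)\widehat{c}(s,r)|^{2}\,dr\,ds\,dt<\frac{R^{2}}{3},
\]
which is exactly the integral hypothesis of Theorem \ref{theorem4-2} after substituting the explicit form of $\widehat{c}$. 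The case $A=\widehat{f}=0$ coincides with it. Finally, $K_{1}=\int_{0}^{T}\!\!\int_{0}^{t}|\widehat{f}(t,s)|^{2}\,ds\,dt=0$, so the smallness inequality from Theorem \ref{theorem3.3} holds trivially for every $N$.

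All hypotheses of Theorem \ref{theorem3.3} are therefore verified, and the decentralized controls (\ref{4.3}) are exactly the specialization of (\ref{3.18}) after substituting the explicit resolvent expression for $\widehat{c}$; Theorem \ref{theorem3.3} then delivers the $\epsilon$-Nash equilibrium property directly. I do not expect a real obstacle here: the entire content of the argument is verifying that the degeneracies forced by $f=0$ and the time-homogeneous kernels collapse the general condition (\ref{6.2}) into the hypothesis displayed in the theorem statement, which is bookkeeping rather than new estimation.
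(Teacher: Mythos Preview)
Your proposal is correct and matches the paper's approach: Section~4 presents Theorems~\ref{theorem4-1}--\ref{theorem4-6} as direct specializations of the general results in Section~3, and the paper gives no separate proof for Theorem~\ref{theorem4-2} precisely because it follows from Theorem~\ref{theorem3.3} via the bookkeeping you describe (in particular, $f=0\Rightarrow\widehat f=0\Rightarrow K_1=0$, so the two cases of (\ref{6.2})--(\ref{6.3}) coincide and the $N$-dependent smallness condition is vacuous). One small caveat: your claim that uniqueness of $\widehat a$ follows from the displayed integral bound via Lemma~\ref{lemma-1} is not quite right---the Fredholm uniqueness condition is that of Theorem~\ref{theorem4-1}, which is not obviously implied by the hypothesis here---but this is harmless, since the proof of Theorem~\ref{theorem3.3} never actually uses uniqueness of $\widehat a$, only existence.
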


\begin{remark}
One can express $\widehat{u}_i$ in (\ref{4.3}) as
$\widehat{u}_i(t)=\frac 1 R c(t)[-P'(t)x_i(t)+\delta(t)]$ where $P'$
and $\delta$ satisfy Riccati equation and backward ordinary
differential equation respectively, see for example, \cite{H 2010},
\cite{H-C-M 2007} and \cite{N-H 2012}. Such procedure is different
from ours here. More precise, it makes use the advantage of state equation and optimal control (note that $s$ and $t$ are separable). However, such skills are hard to be
applied in more general model such as the state equations with delay
below. In such sense, our approach applied here is more flexiable
which can fill this technical gap.
\end{remark}

\subsection{SDEs with delay in the state process}

In this case, suppose cost functional is (\ref{2.2}) and the state equation is described by%
\begin{equation}\label{4.5}
dx_{i}(t)=\left[A(t)x_{i}(t-h)+\int_{t-h}^{t}B(t,s)x_{i}(s)ds\right]dt+C(t)u_{i}(t)dt+D(t)dW_i(t),
\end{equation}
where $h>0$, $x_{i}(t)=k(t),$ $t\in \lbrack -h,0],$ $k(t)$ is
bounded. Hence the delay term
appears in the state process. By introducing function $\Phi_1 $ as%
\begin{eqnarray}\label{4.6}
\left\{\begin{array}{rl} \dsp\frac{\partial \Phi_1(t,s)}{\partial t}
& = \dsp A(t)\Phi_1(t-h,s)+\int_{t-h}^{t}B(t,r)\Phi_1(r,s)dr,\ t\geq
0, \\ [5mm] \Phi_1(0,0) & = 1, \quad \Phi_1(t,s)=0, \quad t<0,
\end{array}\right.
\end{eqnarray}%
we transform (\ref{4.5}) into (see \cite{L 1973}, \cite{O-Z
2010})
\begin{eqnarray}\label{4.7}
x_{i}(t)=\psi(t)+\int_{0}^{t}K(t,s)u_i(s)ds+\int_{0}^{t}\Phi_1
(t,s)D(s)dW_i(s),
\end{eqnarray}%
where $K(t,s)=\Phi_1(t,s)C(s),$ and
\begin{eqnarray*}
\psi(t)=\Phi_1(t,0)k(0)+\int_{-h}^{0}\left[ \Phi_1
(t,s+h)A(s+h)+\int_{0}^{h}\Phi_1(t,u)B(u,s)du\right] k(s)ds .
\end{eqnarray*}%
It follows that the above $\Phi_1$ is bounded and continuous in
$t$. On the other hand, (\ref{4.7}) is one special case of Equation (\ref{2.1}) by letting $b=0,$ hence for $t,s\in[0,T]$,
\begin{eqnarray*}
& & P(t,s) = 0, \quad \widehat{c}(s,t)=\Phi_1(t,s)C(s), \\
& & \widehat{\varphi }(t) = \psi(t), \quad \widehat{\sigma}_{i}(t)=\int_{0}^{t}\Phi_1(t,s)D(s)dW_{i}(s),\\
& & M(t,s) = \int_{0}^{s\wedge t}\Phi_1(t,r)\Phi_1(s,r)|C(r)|^{2}dr.
\end{eqnarray*}
The NCE equation in this situation becomes
\begin{eqnarray}\label{4.8}
\widehat{a}(t)=\psi(t)+\frac{1}{R}\int_{0}^{T}\int_{0}^{s\wedge
t}\Phi_1(t,r)\Phi(s,r)|C(r)|^{2}dr\cdot\big\{\gamma
\widehat{a}(s)+\eta-\widehat{a}(s)\big\}ds.
\end{eqnarray}

\begin{theorem}\label{theorem4-3}
Suppose $A,$ $B,$ $C$ are bounded deterministic functions such that
\[
\sup_{t\in [0,T]}\int_{0}^{T}\Big[(\gamma -1)\int_{0}^{s\wedge
t}\Phi_1(t,r)\Phi_1(s,r)|C(r)|^{2}dr\Big] ^{2}ds<R,
\]%
where $\Phi_1$ is defined by (\ref{4.6}), then NCE equation
(\ref{4.8}) admits a continuous solution.
\end{theorem}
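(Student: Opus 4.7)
The plan is to obtain Theorem \ref{theorem4-3} as a direct specialization of the general NCE existence theorem at the end of Section 3.1 (the unnamed theorem following Lemma \ref{lemma-1}), applied to the Volterra reformulation (\ref{4.7}) of the delayed SDE (\ref{4.5}). Reading (\ref{4.7}) as a special instance of (\ref{2.1}), I would match $b\equiv 0$, $f\equiv 0$, $\varphi=\psi$, $c(t,s)=\Phi_1(t,s)C(s)$, and $\sigma(t,s)=\Phi_1(t,s)D(s)$, so that the NCE equation for this model is precisely (\ref{4.8}).

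Next I would verify the regularity hypotheses of the general theorem and reduce the kernel $\widetilde{M}$ to the concrete expression entering the statement. The fundamental solution $\Phi_1$ of (\ref{4.6}) is bounded and continuous in $t$ on $[0,T]$: since $A$ and $B$ are bounded, one constructs $\Phi_1$ inductively on the intervals $[jh,(j+1)h]\cap[0,T]$ by the method of steps, each step being a linear Volterra integral equation whose solution inherits boundedness and continuity from its data. Together with boundedness of $C$, $D$ and $k$, this makes $\varphi=\psi$ and $c(t,s)=\Phi_1(t,s)C(s)$ bounded and continuous in $t$; the requirements on $b$ and $f$ are trivial since both vanish. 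The choice $b\equiv 0$ forces $P\equiv 0$ in (\ref{3.5.1}), whence (\ref{3.7}) and (\ref{3.8.1}) give $\widehat{c}=c$ and $\widehat{\varphi}=\psi$, recovering the formula $M(t,s)=\int_0^{s\wedge t}\Phi_1(t,r)\Phi_1(s,r)|C(r)|^2\,dr$ displayed just before the statement. The choice $f\equiv 0$ forces $\widehat{f}\equiv 0$, so the iterates in (\ref{3.17}) all vanish, $\widetilde{P}\equiv 0$, and (\ref{3.16}) collapses to $\widetilde{M}=M$ and $\widetilde{\varphi}=\widehat{\varphi}=\psi$.

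With these identifications, the generic sufficient condition $\sup_{t\in[0,T]}\int_0^T |(\gamma-1)\widetilde{M}(t,s)|^2\,ds<R$ of the earlier existence theorem becomes exactly the hypothesis of Theorem \ref{theorem4-3}. Invoking that theorem on the reduced Fredholm form (\ref{3.15}) then yields a continuous solution $\widehat{a}\in C[0,T]$, which under the simplifications above coincides with (\ref{4.8}), completing the argument. I expect the only step with any genuine content to be the uniform boundedness and continuity of $\Phi_1(\cdot,s)$ on $[0,T]$, which is the one place where the specific delay-equation structure (rather than the Volterra framework of Section 3) is used; the remainder of the proof is bookkeeping to specialize the general kernels.
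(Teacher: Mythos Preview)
Your proposal is correct and matches the paper's approach exactly: Theorem~\ref{theorem4-3} is obtained by specializing the general NCE existence theorem at the end of Section~3.1 to the Volterra reformulation (\ref{4.7}) with $b\equiv 0$, $f\equiv 0$, which collapses $P$, $\widehat{f}$, and $\widetilde{P}$ to zero so that $\widetilde{M}=M$ takes the explicit form already computed just before the statement. The paper itself does not write out a separate proof, relying on the preceding identifications and the remark that $\Phi_1$ is bounded and continuous in $t$; your sketch simply fills in those routine verifications.
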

Given $\widehat{a}$ being a solution of NCE equation, if we define
\begin{eqnarray}\label{4.9}
\widehat{u}_i(t)=-\frac 1 R
C(t)\dbE^{\mathcal{F}_t}\int_t^T\Phi_1(s,t) [x_i(s) -\gamma
\widehat{a}(s)-\eta]ds,\quad 1\leq i\leq N,
\end{eqnarray}
where
\begin{align*}
x_{i}(t) = & \; \psi(t)+\int_{0}^{t}\Phi_1(t,s)D(s)dW_{i}(s) \\
& -R^{-1}\int_{0}^{T}\int_{0}^{s\wedge t}\Phi_1(t,r)\Phi_1(s,r)|C(r)|^{2}\dbE^{\mathcal{F}_{r}}[x_{i}(s)-\gamma \widehat{a}(s)-\eta]drds,
\end{align*}
then we get the following asymptotic equilibrium analysis in such setting.

\begin{theorem}\label{theorem4-4}
Suppose that $A,$ $B,$ $C$ and $D$ are bounded deterministic functions,
the NCE equation admits a consistent solution $\widehat{a} \in C[0,T]$. Moreover,
\begin{eqnarray*}
T\int_{0}^{T}\int_{0}^{T}\int_{0}^{s\wedge t}|C(r)|^4\Phi_1(t,r)\Phi_1(s,r)drdsdt<\frac{R^{2}}{3}.
\end{eqnarray*}
Then the set of controls defined in (\ref{4.9}) for $N$ players is
an $\epsilon$-Nash equilibrium.
\end{theorem}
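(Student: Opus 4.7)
The strategy is to reduce Theorem 4.4 to the general asymptotic-equilibrium result, Theorem 3.3, via the Volterra reformulation (4.7) of the delayed SDE (4.5). First, I would observe that after applying the transformation (4.6)--(4.7), the dynamics (4.5) is recast in the canonical Volterra form (2.1) with kernels $\varphi(t)=\psi(t)$, $b(t,s)=0$, $f(t,s)=0$ (per the standing assumption of Section 4), $c(t,s)=\Phi_1(t,s)C(s)$, and $\sigma(t,s)=\Phi_1(t,s)D(s)$. Boundedness and continuity in $t$ of these kernels follow from the boundedness of $A$, $B$, $C$, $D$ together with the standard regularity of the fundamental solution $\Phi_1$ of the linear delay equation (4.6), for which we may appeal to \cite{L 1973} and \cite{O-Z 2010}.

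Next, I would verify the hypotheses of Lemma 3.1, namely (6.2) and (6.3), for the two choices $(A,B)=(\widehat{f},\widehat{c})$ and $(A,B)=(0,\widehat{c})$ demanded by Theorem 3.3. Since $b=0$, the iterated kernels $\Lambda^{k}$ of (3.5.1) all vanish, so $P\equiv 0$; consequently $\widehat{c}(s,t)=c(s,t)=\Phi_1(s,t)C(t)$. Likewise, since $f=0$ and $P=0$, we have $\widehat{f}=0$, so the two cases coincide and collapse to the single case $A=0$, $B=\widehat{c}$. For this case $P_0\equiv 0$ in (6.3) and $\widehat{B}=B=\widehat{c}$, so (6.2) reduces precisely to the integral bound stated in the hypothesis of Theorem 4.4.

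It remains to check the smallness condition $(1+K_2)K_1\frac{360}{N^{2}R^{2}}L^{2}<\frac{1}{2}$ required by Theorem 3.3. Because $\widehat{f}=0$, the quantity $K_1=\int_0^T\int_0^t|\widehat{f}(t,s)|^2\,ds\,dt$ vanishes identically, so the inequality holds trivially for every $N$; this is exactly the degenerate regime singled out in the remark following Lemma 3.6. With every hypothesis of Theorem 3.3 now matched, I would invoke that theorem to conclude at once that the decentralized controls $\widehat{u}_i$ defined in (4.9) form an $\epsilon$-Nash equilibrium.

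The genuine obstacle here is bookkeeping rather than analysis: one must track carefully how the delay transformation (4.6)--(4.7) folds into the Volterra template (2.1), and verify that each hypothesis of Theorem 3.3 either degenerates on account of $b=f=0$ or matches the explicit integral bound displayed in Theorem 4.4. Once this dictionary is written down, no new estimate is needed beyond what has already been established in Sections 3 and 4, and the conclusion follows immediately.
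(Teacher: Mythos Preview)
Your proposal is correct and matches exactly what the paper intends: Theorem~4.4 is stated without its own proof because it is a direct specialization of Theorem~3.3 to the Volterra form (4.7), and your dictionary---$b=0$, $f=0$ (hence $P\equiv 0$, $\widehat{c}=c$, $\widehat{f}=0$, $K_1=0$)---is precisely the one the paper sets up in the paragraph preceding the NCE equation (4.8). The observation that $K_1=0$ makes the smallness condition of Theorem~3.3 vacuous is exactly the point flagged in the remark after Lemma~3.6, so nothing further is needed.
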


\subsection{SDEs with delay in the control}

In this case, suppose the cost functional is (\ref{2.2}) and the
state equation is described by
\begin{equation}\label{4.10}
dx_{i}(t)=A(t)x_{i}(t)dt+C(t)u_i(t-h)dt+D(t)dW_i(t),
\end{equation}
where $h>0,$ $x_{i}(t)=k(t),$ $t\in [-h,0]$, $C(t)=0$ with $t<h.$
The delayed term appears in the control variable. Similarly, by
introducing function $\Phi_2 $ as
\begin{eqnarray}\label{4.11}
\frac{\partial \Phi_2(t,s)}{\partial t} =A(t)\Phi_2(t,s) \quad
\Phi_2(s,s) =1,\quad t,s\in[0,T],
\end{eqnarray}
we can transform equation (\ref{4.10}) into
\begin{eqnarray}\label{4.12}
x_{i}(t)=\psi(t)+\int_{0}^{t}K(t,s)u_i(s)ds+\int_{0}^{t}\Phi_2(t,s)D(s)dW_i(s),
\end{eqnarray}
where
\[
\psi(t)=\Phi_2(t,0)k(0), \quad K(t,s)=\Phi_2(t,s+h)C(s+h), \quad t,s\in[0,T].
\]
Therefore, (\ref{4.12}) is one special case of (\ref{2.1}) with $b=0,$ and
\begin{eqnarray*}
& & P(t,s) = 0,\quad \widehat{c}(s,t)=\Phi_2(s,t+h)C(t+h), \\
& & \widehat{\varphi }(t) = \Phi_2(t,0)k(0),\quad \widehat{\sigma }_i(t)=\int_{0}^{t}\Phi_2(t,s)D(s)dW_{i}(s),\\
& & M(t,s) = \int_{0}^{s\wedge t}\Phi_2(t,r+h)\Phi_2(s,r+h)|C(r+h)|^{2}dr.
\end{eqnarray*}
The NCE equation in this situation becomes
\begin{equation}\label{4.13}
\widehat{a}(t)=\psi (t)+\frac{1}{R}\int_{0}^{T}\int_{0}^{s\wedge
t}\Phi_2(t,r+h)\Phi_2(s,r+h)|C(r+h)|^{2}dr \cdot
\big\{\gamma\widehat{a}(s)+\eta-\widehat{a}(s)\big\}ds.
\end{equation}

\begin{theorem}\label{theorem4-5}
Suppose $A$, $C$ and $D$ are bounded functions such that
\[
\sup_{t\in [0,T]}\int_{0}^{T}\left[ (\gamma -1)\int_{0}^{s\wedge
t}\Phi_2(t,r+h)\Phi_2(s,r+h)|C(r+h)|^{2}dr \right] ^{2}ds<R,
\]%
then NCE equation (\ref{4.13}) admits a unique solution.
\end{theorem}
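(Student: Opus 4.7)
The plan is to derive Theorem 4.5 as a direct specialization of the general solvability result for the NCE equation proved at the end of Section 3.1 (built on Lemma 3.1). The key observation is that in the current setting we are working with the transformed Volterra form (4.12), which is an instance of (2.1) with $b = 0$, combined with the Section~4 standing assumption $f = 0$. Hence both of the resolvent kernels $P$ (generated from $b$ via (3.5.1)) and $\widetilde P$ (generated from $\widehat f$ via (3.17)) vanish, and consequently $\widehat f = 0$ and $\widetilde M = M$ in (3.16). Under this specialization, the general NCE equation (3.14) collapses to the pure Fredholm equation
\begin{equation*}
\widehat a(t) = \widehat\varphi(t) + \frac{1}{R}\int_0^T M(t,s)\bigl[(\gamma-1)\widehat a(s) + \eta\bigr]\,ds,
\end{equation*}
which is precisely (4.13) after substituting $\widehat\varphi(t)=\psi(t)=\Phi_2(t,0)k(0)$ and $M(t,s) = \int_0^{s\wedge t} \Phi_2(t,r+h)\Phi_2(s,r+h)|C(r+h)|^2\,dr$ as identified in the text just before (4.13).

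Next I would cast the equation in the canonical form required by Lemma 3.1, namely
\begin{equation*}
\widehat a(t) = \tilde\psi(t) + \frac{1}{R}\int_0^T \tilde A(t,s)\,\widehat a(s)\,ds,
\end{equation*}
with $\tilde A(t,s) = (\gamma-1)M(t,s)$ and $\tilde\psi(t) = \psi(t) + \frac{\eta}{R}\int_0^T M(t,s)\,ds$. To apply Lemma 3.1 I must verify its three hypotheses: (i) $\tilde\psi$ is bounded and continuous on $[0,T]$; (ii) $\tilde A$ is bounded and continuous in $t$; (iii) $\sup_{t\in[0,T]}\int_0^T |\tilde A(t,s)|^2\,ds < R$. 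Hypothesis (iii) is just the standing assumption of Theorem~4.5. For (i) and (ii) the work is to establish boundedness and continuity of $\Phi_2$, which follows from (4.11): since $A$ is bounded, $\Phi_2(t,s)=\exp\bigl(\int_s^t A(u)\,du\bigr)$ is bounded and jointly continuous in $(t,s)$. Together with boundedness of $C$ and $k$, this gives continuity and boundedness of $\psi$ and of $M(t,s)$ in $t$, and hence of $\tilde\psi$ and $\tilde A$.

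With all hypotheses of Lemma 3.1 verified, that lemma yields a unique continuous solution $\widehat a \in C[0,T]$ of the displayed Fredholm equation, which is (4.13). This establishes Theorem~4.5.

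I do not expect any real obstacle here; the proof is a bookkeeping exercise. The one point that requires attention is checking continuity of $M(t,s)$ in $t$, because of the $s\wedge t$ cutoff in its definition: one splits into the cases $t\le s$ and $t>s$ and uses continuity of $\Phi_2$ on each piece to see that the integral depends continuously on $t$. Everything else reduces to quoting Lemma~3.1.
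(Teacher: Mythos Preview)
Your proposal is correct and matches the paper's intended approach: Theorem~4.5 is stated without proof in the paper precisely because it is a direct specialization of the general solvability theorem in Section~3.1 (via Lemma~3.1), with $b=0$ and $f=0$ forcing $P=0$, $\widehat f=0$, $\widetilde P=0$, and hence $\widetilde M=M$. Your verification of the boundedness and continuity of $\Phi_2$, $\psi$, and $M(\cdot,s)$ is exactly the bookkeeping the paper leaves implicit.
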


Given $\widehat{a}$ being the NCE solution, we define %
\begin{eqnarray}\label{4.14}
\widehat{u}_{i}(t)=-\frac{1}{R}C(t+h)\dbE^{\mathcal{F}_{t}}\3n\int_{t}^{T}\Phi_2
(s,t+h)[x_{i}(s)-\gamma \widehat{a}(s)-\eta]ds,\quad t\in[0,T],
\end{eqnarray}
while $x_{i}$ satisfies
\begin{align*}
x_{i}(t) = & \; \Phi_2(t,0)k(0)+\int_{0}^{t}\Phi_2(t,s)D(s)dW_{i}(s) \\
& -R^{-1}\int_{0}^{T}\int_{0}^{s\wedge t}\Phi_2(t,r+h)\Phi_2(s,r+h)|C(r+h)|^{2}\dbE^{\mathcal{F}_{r}}[x_{i}(s)-\gamma\widehat{a}(s)-\eta]drds,
\end{align*}
then we get the following asymptotic equilibrium analysis,
\begin{theorem}\label{theorem4-6}
Suppose that $b$ and $c$ are bounded, the NCE equation (\ref{3.14})
admits a unique continuous solution $\widehat{a}\in C[0,T]$.
Moreover,
\begin{eqnarray*}
T\int_{0}^{T}\int_{0}^{T}\int_{0}^{s\wedge
t}|C(r+h)|^4\Phi_2(t,r+h)\Phi_2(s,r+h)drdsdt<\frac{R^{2}}{3},
\end{eqnarray*}
where $\Phi_2$ is defined in (\ref{4.11}). Then the set of controls
defined in (\ref{4.14}) for $N$ players is an $\epsilon$-Nash
equilibrium.
\end{theorem}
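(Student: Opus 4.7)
The plan is to obtain Theorem \ref{theorem4-6} as a direct application of the general $\epsilon$-Nash result, Theorem \ref{theorem3.3}, once the hypotheses of the latter are verified in the special Volterra representation of Section 4.3.

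First I would identify the kernels of (\ref{2.1}) induced by the delay-in-control model. Under the convention $f\equiv 0$ of Section 4 and the transformation (\ref{4.12}), one has $b\equiv 0$, so the resolvent kernel $P$ of (\ref{3.5.1}) is identically zero, and consequently
$$
\widehat{f}\equiv 0,\quad \widehat{\varphi}(t)=\Phi_2(t,0)k(0),\quad \widehat{c}(s,t)=\Phi_2(s,t+h)C(t+h),\quad \widehat{\sigma}_i(t)=\int_0^t\Phi_2(t,s)D(s)dW_i(s).
$$
Since $\Phi_2$ solves the linear ODE (\ref{4.11}) with bounded coefficient $A$, it is bounded and jointly continuous on $[0,T]^2$; combined with the boundedness of $A,C,D,k$, this gives the regularity needed to invoke Theorem \ref{theorem3.3}, namely that $\varphi,b,f,c$ are bounded and continuous in $t$.

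Next I would verify the two contraction-type conditions (\ref{6.2})--(\ref{6.3}) required by Theorem \ref{theorem3.3} in the two cases $A=\widehat{f}$, $B=\widehat{c}$ and $A=0$, $B=\widehat{c}$. Because $\widehat{f}\equiv 0$, both cases coincide; moreover in (\ref{6.3}) the auxiliary kernel $P_0$ vanishes, so $\widehat{B}=B=\widehat{c}$, and condition (\ref{6.2}) collapses precisely to
$$
T\int_0^T\!\!\int_0^T\!\!\int_0^{s\wedge t}|\widehat{c}(t,r)\widehat{c}(s,r)|^2\,dr\,ds\,dt<\frac{R^2}{3},
$$
which, after substituting $\widehat{c}(s,t)=\Phi_2(s,t+h)C(t+h)$, is the hypothesis of Theorem \ref{theorem4-6}. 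The final quantitative condition of Theorem \ref{theorem3.3}, $(1+K_2)K_1\tfrac{360}{N^2R^2}L^2<\tfrac{1}{2}$, is trivially satisfied for \emph{every} $N$ because $\widehat{f}\equiv 0$ forces $K_1=\int_0^T\!\!\int_0^t|\widehat{f}(t,s)|^2ds\,dt=0$. Existence of a unique continuous NCE solution $\widehat{a}\in C[0,T]$ is an explicit assumption of the theorem (and can be ensured by an analogue of Theorem \ref{theorem4-5}).

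With the hypotheses of Theorem \ref{theorem3.3} all verified, the controls $\widehat{u}_i$ defined in (\ref{4.14}) constitute an $\epsilon$-Nash equilibrium for the $N$-player problem, yielding Theorem \ref{theorem4-6}. The only substantive, albeit mild, obstacle is the clerical bookkeeping of the degenerated resolvent identities of Section 3 under $b=f=0$ and the verification that the transformed state equation (\ref{4.12}) really does fit into the Volterra framework of (\ref{2.1}); no new analytic machinery is needed beyond what Theorem \ref{theorem3.3} already provides.
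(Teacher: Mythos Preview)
Your proposal is correct and follows exactly the approach the paper intends: Theorem \ref{theorem4-6} is stated without proof in the paper precisely because it is a direct specialization of Theorem \ref{theorem3.3} once the Volterra kernels are read off from (\ref{4.12}), and your reduction---$b\equiv0$, $f\equiv0$, hence $P\equiv P_0\equiv0$, $\widehat{f}\equiv0$, $\widehat{B}=\widehat{c}$, $K_1=0$---is the right bookkeeping. Your observation that the condition $(1+K_2)K_1\tfrac{360}{N^2R^2}L^2<\tfrac12$ is vacuous here because $K_1=0$ matches the paper's own remark after Lemma \ref{lemma6.4} that ``such condition is not required in the nontrivial case $\widehat{f}=0$.''
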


\section{Concluding remark} Herein, we investigate a class of mean-field LQG games
where state equation is some stochastic Volterra integral system.
The NCE consistency condition is derived based on some Fredholm
equations and the $\epsilon$-Nash equilibrium property of
decentralized controls is also established. Our work is the first
attempt to the LQG games with stochastic integral system and there
arise various research directions upon it. On one hand, it is
possible to include the Volterra integral kernel into the cost
functional to be minimized. This provides some potential way to
study the LQG game with time inconsistency. On the other hand, this
paper also considers the mean-field games with stochastic delayed
system. Actually, more extensive research can be given in this
research line and it is anticipated some new consistency conditions
can be given which depend on the delay characters.

\end{document}